\newcommand{\R}{\mathbb{R}}
\renewcommand{\S}{\mathbb{S}}
\newcommand{\C}{\mathbb{C}}
\newcommand{\M}{\mathcal{M}}
\newcommand{\eps}{\varepsilon}
\newcommand{\inj}{\text{inj}}
\newcommand{\ra}{\rightarrow}
\newcommand{\vol}{\text{vol}}
\newcommand{\vphi}{\varphi}
\newcommand{\diam}{\text{diam}}
\newcommand{\dvol}{\text{dvol}}
\newcommand{\Bal}{\text{Bal}}
\newcommand{\dsep}{\text{dsep}}
\renewcommand{\t}{\mathrm{t}}
\newcommand{\K}{\mathcal{K}}
\newcommand{\Kk}{\mathbb{K}}
\def\Xint#1{\mathchoice
{\XXint\displaystyle\textstyle{#1}}%
{\XXint\textstyle\scriptstyle{#1}}%
{\XXint\scriptstyle\scriptscriptstyle{#1}}%
{\XXint\scriptscriptstyle\scriptscriptstyle{#1}}%
\!\int}
\def\XXint#1#2#3{{\setbox0=\hbox{$#1{#2#3}{\int}$}
\vcenter{\hbox{$#2#3$}}\kern-.5\wd0}}
\def\dashint{\Xint-}
\newtheorem{theorem}{Theorem}[section]
\newtheorem{lemma}[theorem]{Lemma}
\newtheorem{proposition}[theorem]{Proposition}
\newtheorem{corollary}[theorem]{Corollary}
\theoremstyle{definition}
\newtheorem{definition}[theorem]{Definition}
\theoremstyle{remark}
\newtheorem{remark}[theorem]{Remark}
\begin{document}

\title[Separation distance of minimal Green energy points]{On the separation distance of minimal Green energy points on compact Riemannian manifolds}

\thanks{This research was partially supported by Ministerio de Econom\'ia y Competitividad under grants MTM2017-83816-P, MTM2014-57590-P and MTM2013-46337-C2-1-P, Banco Santander and University of Cantabria under grant 21.SI01.64658, and a PhD grant by the University of Cantabria. The author is a postdoctoral fellow funded by the FWO  Flanders project EOS  30889451.}

\author{Juan G. Criado del Rey}

\address{Juan G. Criado del Rey. Departement Wiskunde, KU Leuven.}
\email{juan.gcriadodelrey@kuleuven.be}

%\date{Received: date / Accepted: date}
% The correct dates will be entered by the editor

\maketitle

\begin{abstract}
In this article we study point configurations minimizing the discrete energy on a compact Riemannian manifold, where the energy kernel is taken to be the Green's function for the Laplacian. We show that every point in a minimizing configuration lies inside an open set called harmonic ball where no other point can enter, and that the minimum distance between any two distinct points has the optimal asymptotic order. We compute explicit bounds for the minimum distance in the case of Compact Rank One Symmetric Spaces.
\end{abstract}

\section{Introduction and main results}

In this article we continue the study initiated in \cite{CriadoDelReyDiscreteAndContinuous} about the role of the Green's function for the Laplacian on a compact Riemannian manifold $(\M,g)$ of dimension $n \geq 2$ as a tool for obtaining well--distributed points. The Green's function $G(x,y)$ is symmetric, smooth off the diagonal, and it satisfies
\begin{equation}\label{eq:def_green_function}
\Delta G(x,\cdot) = \delta_x-V^{-1}\vol
\end{equation} in the sense of distributions, where $\vol$ is the Riemannian volume form or density, and $V = \vol(\M)$ is the volume of $\M$ (see \cite[Ch. 4]{Aubin}). Throughout this article we adopt the sign convention for the Laplacian so that its representation in local coordinates becomes
\[
\Delta = -\frac{1}{\sqrt{|g|}}\partial_i \sqrt{|g|}g^{ij}\partial_j,
\] where $|g|$ is the determinant of the metric, $g^{ij}$ are the coefficients of $g^{-1}$, and we use the Einstein summation convention. It is customary to fix $G$ from \eqref{eq:def_green_function} by imposing $\int G = 0$. The \emph{Green energy} of a set of $N\geq 2$ distinct points $\omega_N = \{x_1, ..., x_N\}\subset\M$ is the sum of the pairwise interactions between the points. This is,
\begin{equation}\label{eq:def_green_energy}
E_G(\omega_N) = E_G(x_1, ..., x_N) = \sum_{i\neq j}G(x_i,x_j).
\end{equation}

When $\M = \S^2$ is the two dimensional sphere, $G(x,y)$ is (up to scaling by positive constants) the logarithmic kernel $\log\|x-y\|^{-1}$ and thus $E_G$ corresponds to the classical logarithmic energy given by
\begin{equation}\label{eq:def_elog}
E_{\log}(\omega_N) = \sum_{i\neq j}\log\|x_i-x_j\|^{-1}.
\end{equation} When $\M = \S^n$ for $n > 2$, the Green's function is proportional to $\|x-y\|^{2-n}$ plus higher order terms (see \cite[A. 1]{CriadoDelReyDiscreteAndContinuous}), so in this case one would expect $E_G$ to share some properties with the Riesz $s$--energy
\begin{equation}\label{eq:def_riesz}
E_s(\omega_N) = \sum_{i\neq j}\|x_i-x_j\|^{-s}
\end{equation} for the value of the parameter $s = n-2$.

Points minimizing $E_{\log}$ and $E_s$ on the unit sphere tend to exhibit good distribution properties. Classical Potential Theory \cite{Landkof} shows, for example, that point configurations minimizing $E_{\log}$ or $E_s$ for $s\in (0,n)$ are asymptotically uniformly distributed. This means that if $(\omega_N^*)_N$ is a sequence of minimal energy points, then the empirical measures $\frac{1}{N}\sum_{x\in\omega_N^*}\delta_x$ converge weak$^*$ to the uniform probability measure on $\S^n$. In particular, minimal logarithmic and Riesz energy points are feasible candidates for their use in quasi Monte Carlo integration on $\S^n$.

The \emph{separation distance} $\dsep(\omega_N)$ of a point configuration $\omega_N$ on a subset of Euclidean space is the least distance between any two pair of distinct points. This is,
\begin{equation}\label{eq:def_dsep}
\dsep(\omega_N) = \min\{\|x-y\| : x,y\in\omega_N,\,x\neq y\}.
\end{equation} Rakhmanov, Saff and Zhou proved in \cite{Saff94} that minimal logarithmic energy points are never too close one to each other, in the sense that there is a positive constant $c$ such that, if $(\omega_N^*)_N$ is a sequence of minimizing configurations for $E_{\log}$ on $\S^2$, then
\begin{equation}\label{eq:dsep_log}
\dsep(\omega_N) \geq cN^{-1/2}.
\end{equation} This result was later improved by Dubickas \cite{Dubickas96} and Dragnev \cite{DragnevSeparation2002}. It is easy to see that the asymptotic order in \eqref{eq:dsep_log} cannot be improved. More generally, the asymptotic order for the separation distance on any $n$--dimensional submanifold of $\R^m$ is at most $\Omega(N^{-1/n})$. For this reason, we say that a sequence of point configurations $(\omega_N)_N$ is \emph{well--separated} if there is a positive constant $c$ such that $\dsep(\omega_N) \geq cN^{-1/n}$.

To our knowledge, it is still unknown whether minimal logarithmic energy points are well--separated on $\S^n$ for $n > 2$. Damelin and Maymeskul proved \cite{Damelin} that sequences of minimal Riesz $s$--energy points on $\S^n$ for $n > 2$ satisfy
\[
\dsep(\omega_N) \geq cN^{-\frac{1}{s+2}}
\] for $s\in (0,n-2]$, which shows that minimal Riesz energy points are well--separated when $s = n-2$. As $s$ increases, the problem becomes more similar to the best packing problem (because the least distance in \eqref{eq:def_riesz} dominates the total energy), so that minimal Riesz energy points on $\S^n$ are well--separated when $s\geq n-2$ (see \cite{RieszSphericalPotentials} for $s\in (n-2,n-1)$, \cite{Gotz} for $s = n-1$, \cite{KuijlaarsSeparation} for $s\in(n-1,n)$ and \cite{KuijlaarsAsymptotics} for $s\in (n,\infty)$). A general result due to Dahlberg \cite{Dahlberg} states that minimal Riesz energy points with $s = n-1$ are well--separated on $C^{1,\alpha}$ hypersurfaces of $\R^{n+1}$. In \cite{HarmonicEnsembleCarlos} the authors study a particular determinantal point process on the sphere called harmonic ensemble. Points drawn from the harmonic ensemble have low expected Riesz and logarithmic energy and, moreover, they are likely to have a separation distance of order $\Omega(N^{-\frac{2n+2}{n^2+2n}})$.

In \cite{AxisSupported,Hypersphere,SloanBirthday} the authors study minimal energy configurations on the sphere in the presence of an external field $Q:\S^n\ra\R\cup\{+\infty\}$ and they develop a technique to find bounds on the separation distance of (free--field) minimal energy points. In this article we adopt a similar approach, but using a slightly different language. The main reason why their technique cannot be directly applied to general Riemannian manifolds has to do with the mean value property (MVP) for harmonic functions. When $\M = \S^n$, if we pick a geodesic ball $B$ and a harmonic function $\vphi:B\ra\R$, the MVP states that
\begin{equation}\label{eq:mvp_geod}
\dashint_B\vphi = \vphi(p),
\end{equation} where $\dashint_B = \vol(B)^{-1}\int$ is the average value of the function. This property does not hold for a general $\M$ without further assumptions on the symmetries of $\M$. There exist, however, certain open subsets of $\M$ called harmonic balls (see Definition \ref{def:harmonic_ball}) for which the MVP holds true. These sets will be the natural ``areas of influence'' of points belonging to a minimizing configuration for the Green energy. Harmonic balls were introduced in \cite{SjodinHarmonic} for the case $\M = \R^n$, and they were studied in \cite{Varchenko,HedenmalmHyperbolic,RossHeleShaw,RossMonge} in the context of Hele--Shaw flow problems on manifolds. The authors in \cite{GustafssonPartialBalayage} study harmonic balls within the theory of partial balayage on Riemannian manifolds. The rigorous definition of harmonic balls will be given at the beginning of Section \ref{sec:harmonic_balls}. By now, let us denote the harmonic ball with center $p$ and radius $a$ by $B^h(p,a)$, and keep in mind that it is an open set whose size increases with $a$. Our first result states that every point in a minimizing configuration for the Green energy is surrounded by a harmonic ball (of suitable radius) where no other point in the configuration can enter. The proof of this theorem can be found in Section \ref{sec:harmonic_balls}.

\begin{theorem}\label{thm:main1} Let $\M$ be a compact Riemannian manifold of dimension $n\geq 2$, and let $\omega_N^* = \{x_1^*, ..., x_N^*\}$ be a configuration of $N \geq 2$ distinct points minimizing the discrete Green energy on $\M$. Then, for every $i\neq j$,
\[
x_i^*\notin B^h\left(x_j^*,\frac{V}{N-1}\right),
\] where $V = \textup{vol}(\M)$ is the volume of $\M$.
\end{theorem}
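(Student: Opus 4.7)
The plan is to combine the variational characterization of the minimizing configuration with the mean-value identity built into the harmonic ball. Since $G$ is symmetric, freezing every $x_k^*$ with $k\neq i$ turns $E_G$ into $2U_i(\cdot)+\mathrm{const}$, where
$$U_i(y) := \sum_{j\neq i}G(x_j^*,y).$$
Hence $x_i^*$ is a global minimizer of $U_i$ on $\M$; in particular, for the harmonic ball $B_i := B^h(x_i^*,a)$ with $a := V/(N-1)$, averaging the pointwise inequality $U_i(x_i^*)\le U_i(y)$ over $B_i$ yields
$$U_i(x_i^*)\le \frac{1}{\vol(B_i)}\int_{B_i}U_i\,d\vol.$$
This averaging is legitimate even if some $x_j^*$ happens to lie inside $B_i$, because $G$ has only integrable singularities.

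The next step is to evaluate the right-hand side using the defining property of the harmonic ball. I anticipate that Section~\ref{sec:harmonic_balls} will associate to $B_i$ a nonnegative potential $u_i$ on $\M$, vanishing outside $\overline{B_i}$, strictly positive on $B_i\setminus\{x_i^*\}$, with $\vol(B_i)=a$, and satisfying in the distributional sense
$$\Delta u_i = a\,\delta_{x_i^*} - \mathbf{1}_{B_i}\vol;$$
equivalently (comparing Laplacians and using $\int G=0$),
$$\int_{B_i}G(z,y)\,d\vol(z) = a\,G(x_i^*,y)-u_i(y)\qquad\text{for every }y\in\M.$$
Setting $y=x_j^*$ and summing over $j\neq i$ produces the key identity
$$\int_{B_i}U_i\,d\vol = a\,U_i(x_i^*)-\sum_{j\neq i}u_i(x_j^*).$$

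Dividing by $a=\vol(B_i)$ and combining with the variational inequality, the $U_i(x_i^*)$ terms cancel and we are left with $\sum_{j\neq i}u_i(x_j^*)\le 0$. Every summand being nonnegative, each $u_i(x_j^*)$ must vanish, and strict positivity of $u_i$ on $B_i\setminus\{x_i^*\}$ then forces $x_j^*\notin B_i$ for all $j\neq i$, which is the claim. The energy-minimizing step itself is essentially a one-liner; the genuine work sits in Section~\ref{sec:harmonic_balls}, where one has to construct the harmonic ball of the prescribed volume $V/(N-1)$ (a value as large as the entire volume of $\M$ when $N=2$, which is a point to keep in mind), establish the support and positivity properties of $u_i$ via a maximum-principle argument for its obstacle-type equation, and justify the Green-function identity rigorously — typically by excising small geodesic balls around $x_i^*$ and any $x_j^*\in B_i$ and passing to the limit using integrability of $G$ and continuity of $u_i$ away from $x_i^*$.
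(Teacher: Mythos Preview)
Your averaging argument has a genuine gap in the Green-function identity. You assert that
\[
\int_{B_i}G(z,y)\,d\vol(z)=a\,G(x_i^*,y)-u_i(y),
\]
but this is off by a nonzero additive constant. Both sides do have the same distributional Laplacian, so they differ by a constant; integrating over $\M$ and using $\int G=0$ shows the left-hand side has mean zero, while the right-hand side has mean $-\bar u_i$ with $\bar u_i:=V^{-1}\int_\M u_i$. Since your $u_i$ vanishes outside $\overline{B_i}$ and is strictly positive on the open set $B_i$, one has $\bar u_i>0$. The correct identity is therefore
\[
\int_{B_i}G(z,y)\,d\vol(z)=a\,G(x_i^*,y)-u_i(y)+\bar u_i,
\]
and carrying this through, the variational inequality $aU_i(x_i^*)\le\int_{B_i}U_i$ gives only
\[
\sum_{j\ne i}u_i(x_j^*)\le(N-1)\bar u_i,
\]
which is strictly positive on the right, so you cannot force each $u_i(x_j^*)$ to vanish. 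The underlying reason is that $G(\cdot,y)$ is not harmonic on $B_i$ (its Laplacian there is the constant $-V^{-1}$), so the mean-value machinery produces an error term proportional to $\int_{B_i}u_i$, and this term has the wrong sign to be discarded.

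The paper avoids this by using the obstacle characterization of $u_a$ (the largest function with $u\le aG^{\delta_p}$ and $\Delta u\le\t_a$) rather than averaging. Fixing a pair $i\ne j$ and using that $x_i^*$ minimizes $U_i$, one checks that
\[
f(y):=aU_i(x_i^*)-a\sum_{k\ne i,j}G(x_k^*,y)
\]
satisfies $f\le aG^{\delta_{x_j^*}}$ everywhere with equality at $y=x_i^*$, and $\Delta f=-a\sum_{k\ne i,j}\delta_{x_k^*}+\tfrac{N-2}{N-1}\vol\le\t_a\vol$; here the delta contributions enter with the favourable sign for the obstacle constraint. Maximality of $u_a$ then forces $u_a(x_i^*)=aG^{\delta_{x_j^*}}(x_i^*)$, i.e.\ $x_i^*\notin B^h(x_j^*,a)$.
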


When the manifold $\M$ is symmetric enough, it is possible to obtain a simple radially symmetric expression for the Green's function. As a consequence, in this case harmonic balls are just geodesic balls. Gustafsson and Roos proved \cite[Theorem 14]{GustafssonPartialBalayage} that in dimension $2$, and for a sufficiently small radius, geodesic and harmonic balls coincide as families if and only if $\M$ is a surface with constant Gaussian curvature. Here we will show that these concepts also coincide for a family of very symmetric manifolds called locally harmonic Blaschke manifolds (see Theorem \ref{thm:harmonic_geodesic}). In particular, the statement holds true for the Compact Rank One Symmetric Spaces (CROSS), consisting of the sphere $\S^n$, the projective spaces $\mathbb{R}\mathbb{P}^m$, $\mathbb{C}\mathbb{P}^m$, $\mathbb{H}\mathbb{P}^m$, and the Cayley plane $\mathbb{O}\mathbb{P}^2$ (we adopt the convention that the projective spaces and the Cayley plane all have diameter $\pi/2$). With this result in hand, the following separation result, whose proof can be found in Section \ref{sec:cross}, is an easy consequence of Theorem \ref{thm:main1}. If $\omega_N = \{x_1, ..., x_N\}$ is a set of $N\geq 2$ points on a Riemannian manifold, we define
\[
\dsep(\omega_N) = \min\{d(x,y) : x,y\in \omega_N, x\neq y\},
\] where $d(x,y)$ is the intrinsic Riemannian distance between $x$ and $y$.

\begin{theorem}\label{thm:main2} Let $\M$ be a CROSS, and let $(\omega_N^*)_{N\geq 2}$, be a sequence of minimal Green energy points. Then, for every $N\geq 2$,
\[
\textup{dsep}(\omega_N^*) \geq C_\M(N-1)^{-1/n},
\] where the values of the constants $C_\M$ are given by

\begin{center}
\begin{tabular}{|c||c|c|c|c|c|}
\hline
$\M$ & $\S^n$ & $\mathbb{R}\mathbb{P}^m$ & $\C\mathbb{P}^m$ & $\mathbb{H}\mathbb{P}^m$ & $\mathbb{O}\mathbb{P}^2$\\
\hline
$C_\M$ & $\left(\frac{n\sqrt{\pi}\,\Gamma\left(\frac{n}{2}\right)}{\Gamma\left(\frac{n+1}{2}\right)}\right)^{1/n}$ & $\left(\frac{m\sqrt{\pi}\,\Gamma\left(\frac{m}{2}\right)}{2\,\Gamma\left(\frac{m+1}{2}\right)}\right)^{1/m}$ & $1$ & $\left(\frac{1}{2m+1}\right)^{1/4m}$ & $\left(\frac{1}{165}\right)^{1/16}$\\
\hline
\end{tabular}
\end{center}
\end{theorem}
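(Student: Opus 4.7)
The plan is to combine Theorem~\ref{thm:main1} with the identification of harmonic balls with geodesic balls on a CROSS (the content of Theorem~\ref{thm:harmonic_geodesic}), and then to extract the explicit constants $C_\M$ by a direct volume computation using the standard formula for the Jacobian of the exponential map on these symmetric spaces.

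First, I would fix a minimizing configuration $\omega_N^*$. By Theorem~\ref{thm:main1}, any two distinct points satisfy $x_i^* \notin B^h(x_j^*, V/(N-1))$. Since every CROSS is a locally harmonic Blaschke manifold, Theorem~\ref{thm:harmonic_geodesic} allows me to replace this harmonic ball by a geodesic ball of the same volume: $B^h(p, V/(N-1)) = B(p, r_N)$ where $r_N$ is the unique radius satisfying
\[
\vol\bigl(B(p, r_N)\bigr) \;=\; \frac{V}{N-1}.
\]
Consequently $\dsep(\omega_N^*) \geq r_N$, and the task reduces to bounding $r_N$ from below in the stated form.

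For this I would invoke the well-known fact that, on a CROSS, the Jacobian of the exponential map in geodesic polar coordinates has the shape $\sin^{n-1}(t)\cos^k(t)$ for a small non-negative integer $k$: namely $k = 0$ for $\S^n$ and $\mathbb{R}\mathbb{P}^m$, $k = 1$ for $\C\mathbb{P}^m$, $k = 3$ for $\mathbb{H}\mathbb{P}^m$, and $k = 7$ for $\mathbb{O}\mathbb{P}^2$. Integrating this Jacobian against the area of the unit sphere in the tangent space, and then applying the pointwise bounds $\sin t \leq t$ and $\cos t \leq 1$, yields an inequality
\[
\vol\bigl(B(p, r)\bigr) \;\leq\; A_\M\, r^n
\]
for an explicit constant $A_\M$. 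Plugging in $r = r_N$ and rearranging gives $r_N \geq (V/A_\M)^{1/n}(N-1)^{-1/n}$, and a direct computation identifies $(V/A_\M)^{1/n}$ with the $C_\M$ tabulated in the statement.

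The only step with genuine computational content is the evaluation of $\int_0^{\sin r} u^{n-1}(1-u^2)^{(k-1)/2}\,du$ in the projective cases (after substituting $u = \sin t$) and its comparison with the total volume at $r = \pi/2$. This recovers $A_\M = (2m+1)V$ for $\mathbb{H}\mathbb{P}^m$ (because the underlying Beta-type integral collapses to $1/[4m(2m+1)]$) and $A_\M = 165\,V$ for $\mathbb{O}\mathbb{P}^2$ (since the analogous integral equals $1/2640$ and $2640/16 = 165$). The sphere and real projective cases reduce immediately to $\int_0^r t^{n-1}\,dt = r^n/n$ combined with the classical formulas for $\omega_{n-1}$ and $V$, yielding the Gamma-function expressions in the table. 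I do not anticipate any serious obstacle beyond this bookkeeping: once Theorem~\ref{thm:main1} and Theorem~\ref{thm:harmonic_geodesic} are in hand, the proof is a direct calculation carried out separately on each of the five model spaces.
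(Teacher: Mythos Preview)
Your proposal is correct and follows essentially the same route as the paper: apply Theorem~\ref{thm:main1}, convert the harmonic ball to a geodesic ball via Theorem~\ref{thm:harmonic_geodesic} (equivalently, Corollary~\ref{cor:dsep_lh}), then bound the geodesic-ball volume using $\sin t\le t$, $\cos t\le 1$ on the standard Jacobian $\sin^{n-1}t\,\cos^{\dim_{\mathbb R}\mathbb K-1}t$ and read off $C_\M=(nV/\vol(\S^{n-1}))^{1/n}$. The only cosmetic difference is that you recover the total volumes $V$ internally via the Beta-type integrals, whereas the paper quotes the known values of $\vol(\mathbb K\mathbb P^m)$ and simplifies the same ratio.
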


\begin{remark} We do not know whether the values for the constants given in Theorem \ref{thm:main2} are sharp or not. However, when $\M = \S^2$, we get $C_{\S^2} = 2$. Thus $\dsep(\omega_N^*) \geq \frac{2}{(N-1)^{1/2}}$. Since $\|x-y\|$ and $d(x,y)$ are approximately the same for points $x$ and $y$ very close one to each other, this bound is analogous to the one obtained in \cite[Theorem 2]{DragnevSeparation2002} for minimal logarithmic energy points on $\S^2$.
\end{remark}

In the general case we will adapt the results and arguments in \cite{BlankDivergenceForm} and \cite{BlankManifolds} to show that every harmonic ball contains a geodesic ball of proportional radius. Combining this fact with Theorem \ref{thm:main1} yields the following theorem:

\begin{theorem}\label{thm:main3} Let $\M$ be a compact Riemannian manifold of dimension $n\geq 2$. There exists a constant $c>0$ such that, for every configuration $\omega_N^* = \{x_1^*, ..., x_N^*\}$ of $N\geq 2$ distinct points on $\M$ minimizing the discrete Green energy,
\[
\textup{dsep}(\omega_N^*) \geq cN^{-1/n}.
\] This is, minimal Green energy points are well--separated.
\end{theorem}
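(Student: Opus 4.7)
The plan is to reduce Theorem \ref{thm:main3} to Theorem \ref{thm:main1} via a uniform comparison between harmonic balls and geodesic balls. Concretely, I aim to establish that there exist constants $c_0 > 0$ and $a_0 > 0$, depending only on $(\M, g)$, such that for every $p \in \M$ and every $a \in (0, a_0]$,
\[
B^h(p, a) \supseteq B(p, c_0\, a^{1/n}),
\]
where $B(p, r)$ is the open geodesic ball. Once this inclusion is in place, applying it with $a = V/(N-1)$ (valid for all $N \geq N_0$, with $N_0 = N_0(\M, g)$ chosen so that $V/(N-1) \leq a_0$) and combining with Theorem \ref{thm:main1} yields
\[
d(x_i^*, x_j^*) \geq c_0\left(\frac{V}{N-1}\right)^{1/n} = c_0 V^{1/n} (N-1)^{-1/n}
\]
for all $i \neq j$ and $N \geq N_0$. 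The finitely many cases $2 \leq N < N_0$ are absorbed into $c$ using that $\dsep(\omega_N^*) > 0$ for each fixed $N$ and $\diam(\M) < \infty$.

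The core of the argument is therefore the non-degeneracy of harmonic balls, which I plan to obtain by adapting \cite{BlankDivergenceForm, BlankManifolds}. The steps are:
\begin{enumerate}
\item Characterize $B^h(p, a)$ as the non-coincidence set of an obstacle problem associated with the Laplacian on $\M$, equivalently the support of a partial balayage measure in the sense of \cite{GustafssonPartialBalayage}.
\item Cover $\M$ by a finite atlas of normal coordinate charts of uniform radius $\rho_0 > 0$ in which, by compactness, the metric coefficients and their inverses are uniformly bounded; in particular $\Delta$ becomes a uniformly elliptic divergence-form operator with bounded measurable coefficients in each chart.
\item Show that for $a \in (0, a_0]$ the harmonic ball $B^h(p, a)$ is contained in a single such chart, by means of an a priori diameter bound in terms of $a$.
\item Apply the inradius estimate of \cite{BlankDivergenceForm} to the obstacle problem in the chart: the non-coincidence set contains a Euclidean ball around $p$ of radius at least $c_1 |B^h(p, a)|^{1/n}$, with $c_1$ depending only on the ellipticity bounds.
\item Use the volume normalization of harmonic balls, which gives $|B^h(p,a)|$ comparable to $a$, together with the equivalence between Euclidean and geodesic distance in normal coordinates, to convert the Euclidean inradius bound into the claimed geodesic one $c_0\, a^{1/n}$.
\end{enumerate}

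The hard part is making the constant $c_0$ uniform in $p \in \M$. Compactness reduces this to uniformly controlling the ellipticity constants and the harmonic radius, which is standard, and to checking that the Blank-type inradius estimate in \cite{BlankDivergenceForm} depends only on those constants rather than on the particular chart. A secondary technical point is step (iii): the diameter of $B^h(p, a)$ must be shown to shrink to zero as $a \to 0^+$, which I expect to follow either from a direct energy estimate for the obstacle problem or from comparison with harmonic balls in space forms of bounded sectional curvature, and it is this estimate that ultimately fixes $a_0$.
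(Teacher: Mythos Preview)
Your proposal is correct and takes essentially the same approach as the paper: reduce Theorem~\ref{thm:main3} to Theorem~\ref{thm:main1} via a uniform inclusion $B(p, c_0 a^{1/n}) \subseteq B^h(p, a)$ for $a \leq a_0$, obtained by adapting the obstacle-problem techniques of \cite{BlankDivergenceForm, BlankManifolds}. The paper carries out your steps (iii)--(iv) by proving $\overline{B^h(p,a)} \subset B(p,R)$ via a local non-degeneracy estimate for $aG^{\delta_p}-u_a$ together with the lower bound $u_a \geq \min aG^{\delta_p}$, then explicitly identifying $B^h(p, r^n)$ with the mean value sets $D_p(r)$ of \cite{BlankManifolds} and deriving the inradius bound from Green-function estimates of \cite{LSW} plus a scaling argument, rather than invoking a black-box inradius result.
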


The proof of Theorem \ref{thm:main3} will be presented in Section \ref{sec:general}.

\section{Harmonic balls and the areas of influence of minimal Green energy points}\label{sec:harmonic_balls}

This section will be devoted to the definition and basic properties of harmonic balls, and the proof of Theorem \ref{thm:main1}. In \cite{GustafssonPartialBalayage} Gustafsson and Roos establish the foundations of the theory of partial balayage on Riemannian manifolds. In its most simple formulation, the partial balayage $\nu = \Bal(\sigma,\lambda)$ of a signed measure $\sigma$ with respect to a reference signed measure $\lambda$ is another signed measure obtained by rearranging the mass of $\sigma$ in such a way that $\nu$ stays below $\lambda$, using the least amount of energy (see \cite[Definition 1]{GustafssonPartialBalayage}). This formulation, however, is not enough if we want to define the balayage of measures with infinite energy such as the delta distribution $\sigma = \delta_p$.

In this article we will not make use of the definition of partial balayage in its full generality and we will consider only the case when $\sigma_-$ has finite energy (as in the first part of Section 5 in \cite{GustafssonPartialBalayage}). We begin by introducing the notion of harmonic ball.

\begin{remark} Throughout this article it will be convenient to use the notation
\[
\t_a = 1-aV^{-1},
\] where $V = \vol(\M)$ is the volume of $\M$, and $a$ is some real number with $0 < a < V$. Observe that $\t_a > 0$ and $\t_a > \t_b$ if $0 < a < b < V$. The number $\t_a$ equals minus the mass of the charge distribution $\sigma = a\delta_p-\vol$ and the notation comes from \cite{GustafssonPartialBalayage}.
\end{remark}

\begin{definition}\label{def:harmonic_ball} Let $a$ be a real number with $0 < a < V$, where $V = \vol(\M)$ is the volume of $\M$, and let us denote by $G^{\delta_p}(x) = G(p,x)$ the Green's function centered at $p$. Consider the set
\[
\mathcal{K}_a = \{u\in W^{1,2}(\M) : u \leq aG^{\delta_p}\},
\] where $W^{1,2}(\M)$ denotes the Sobolev space of square integrable functions on $\M$ whose first weak derivatives are also square integrable. The functional
\[
J_a(u) = \int_{\M}\langle\nabla u,\nabla u\rangle-2\t_a u,
\] has a unique minimizer $u_a$ in $\K_a$. The minimizer $u_a$ is a continuous function, and in terms of it we define the \emph{harmonic ball} with center $p$ and volume $a$ as the non--contact set
\[
B^h(p,a) = \{x\in\M : u_a(x) < aG^{\delta_p}(x)\}.
\]
\end{definition}

\begin{remark}\label{rmk:harmonic_ball} Since $u_a$ is continuous (see \cite[Remark 3]{GustafssonPartialBalayage} and also \cite[Lemma 3]{GardinerSjodin2007}) the harmonic ball $B^h(p,a)$ is an open set and, moreover, since $\lim_{x\ra p}aG^{\delta_p}(x) = +\infty$, we always have $p\in B^h(p,a)$.
\end{remark}

\begin{remark}\label{rmk:frostman} Theorem 11 in \cite{GustafssonPartialBalayage} shows that the free boundary $\partial B^h(p,a)$ has (Riemannian) measure zero and that
\begin{equation}\label{eq:lap_ua_prev}
\Delta u_a = \t_a\vol\chi_{\{u_a < aG^{\delta_p}\}}+(a\delta_p-aV^{-1}\vol)\chi_{\{u_a = aG^{\delta_p}\}}
\end{equation} in the sense of distributions, where $\chi_B$ denotes the indicator function of a subset $B\subseteq\M$. Moreover, since $p\in B^h(p,a) = \{u_a < aG^{\delta_p}\}$ (see Remark \ref{rmk:harmonic_ball}), \eqref{eq:lap_ua_prev} becomes simply
\begin{equation}\label{eq:lap_ua}
\Delta u_a = \t_a\vol\chi_{\{u_a < aG^{\delta_p}\}}-aV^{1}\vol\chi_{\{u_a = aG^{\delta_p}\}}.
\end{equation} It follows that
\[
\Delta(u_a-aG^{\delta_p}) = \vol-a\delta_p \qquad \text{ on }B^h(p,a)
\] in the sense of distributions. Hence, if $\vphi$ is harmonic on $B^h(p,a)$, then
\begin{equation}\label{eq:mvp_prev}
0 = \int_{B^h(p,a)}(u_a-aG^{\delta_p})\Delta\vphi = \left(\int_{B^h(p,a)}\vphi\right)-a\vphi(p).
\end{equation} Taking $\vphi \equiv 1$ yields that $\vol(B^h(p,a)) = a$, and thus \eqref{eq:mvp_prev} can be expressed in the more familiar MVP form
\begin{equation}\label{eq:mvp}
\dashint_{B^h(p,a)}\vphi = \vphi(p).
\end{equation}
\end{remark}

\begin{remark}\label{rmk:obstacle_ua} The function $u_a$ can be also seen as the solution to an obstacle--type problem. From the general definition of partial balayage \cite[Definition 5]{GustafssonPartialBalayage} and \cite[Theorem 7]{GustafssonPartialBalayage}, $u_a$ is the largest function $u$ such that, in the sense of distributions, satisfies
\begin{equation}\label{eq:obstacle_ua}
\left\{\begin{array}{l}
u \leq aG^{\delta_p},\\
\Delta u \leq \t_a.
\end{array}\right.
\end{equation}
\end{remark}

We now prove that harmonic balls are connected sets. The argument is the same as in \cite[Proposition 2.6]{HedenmalmHyperbolic}, where the authors consider harmonic balls on hyperbolic surfaces, but we include it here for completeness.

\begin{proposition}\label{prop:bharm_connected} $B^h(p,a)$ is a connected subset of $\M$.
\end{proposition}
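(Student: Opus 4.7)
The plan is to argue by contradiction using the minimum principle for superharmonic functions applied to $w := u_a - aG^{\delta_p}$. Suppose $B^h(p,a)$ is disconnected. Since $p \in B^h(p,a)$ by Remark~\ref{rmk:harmonic_ball}, the point $p$ lies in exactly one component, so I can pick another connected component $A$ of $B^h(p,a)$ that does not contain $p$. Because $A$ is open and disjoint from the component containing $p$, the point $p$ has an open neighbourhood missing $A$; in particular $p \notin \overline{A}$, and therefore $G^{\delta_p}$, and with it $w$, is continuous on the compact set $\overline{A}$.

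Next I would collect three facts about $w$ on $\overline{A}$. First, $w < 0$ on $A$ by the very definition of the non-contact set. Second, $w = 0$ on $\partial A$: any $x \in \partial A$ cannot sit in a different component of $B^h(p,a)$ (otherwise that component would meet $A$), hence $x$ belongs to the coincidence set $\{u_a = aG^{\delta_p}\}$ and $w(x)=0$. Third, from the identity
\[
\Delta(u_a - aG^{\delta_p}) = \vol - a\delta_p \quad \text{on } B^h(p,a)
\]
recorded in Remark~\ref{rmk:frostman}, together with $p \notin A$, I obtain $\Delta w = \vol$ distributionally on $A$; with the paper's sign convention, $\Delta w \geq 0$ is exactly the distributional definition of superharmonicity.

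By the weak minimum principle for superharmonic functions on the compact set $\overline{A}$, the minimum of $w$ is attained on $\partial A$, so $w \geq 0$ on $\overline{A}$ by the second fact above. This contradicts the first fact, and therefore no such component $A$ exists, proving that $B^h(p,a)$ is connected. I do not foresee a genuine obstacle: the only points needing care are the sign convention for $\Delta$ (superharmonic means $\Delta w \geq 0$ here) and the elementary topological observation that boundary points of one component of an open set cannot belong to another component, so that $\partial A$ sits entirely in the coincidence set. No regularity beyond the continuity of $u_a$ already recorded in Definition~\ref{def:harmonic_ball} is needed.
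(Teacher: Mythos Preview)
Your proposal is correct and follows essentially the same argument as the paper's proof: both pick a hypothetical second component not containing $p$, observe that $\Delta(u_a - aG^{\delta_p}) = 1 > 0$ there (since $p$ stays away), note that $u_a - aG^{\delta_p}$ vanishes on the component's boundary, and invoke the maximum/minimum principle to contradict $u_a < aG^{\delta_p}$ on the component. Your version is somewhat more explicit about the topological point that $\partial A$ lies in the coincidence set and about the sign convention, but the underlying idea is identical.
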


\begin{proof} Let $B$ be the connected component of $B^h(p,a)$ containing $p$ (see Remark \ref{rmk:harmonic_ball}). Assume, to the contrary, that there is another connected component $B'$ of $B^h(p,a)$. Since $B$ is open and $p\in B$, the point $p$ is positive distance away from $B'$, so $\Delta aG^{\delta_p} = -aV^{-1}$ in $B'$. From \eqref{eq:lap_ua}, in $B'$ we have $\Delta u_a = 1-aV^{-1}$. Thus $\Delta u_a \geq \Delta aG^{\delta_p}$ in $B'$, and both functions coincide on $\partial B'$. By the Maximum Principle, $u_a \geq aG^{\delta_p}$ in $B'$, which is a contradiction.
\end{proof}

The next proposition shows that harmonic balls are nested according to their volume. A similar result is proved in \cite[Lemma 4.1]{BlankManifolds}, where the authors consider harmonic balls intersecting a reference submanifold with boundary. Again, the argument here is very similar, but we include it for completeness.

\begin{proposition}\label{prop:bharm_nested} Let $0 < a \leq b < V$. Then $B^h(p,a)\subseteq B^h(p,b)$.
\end{proposition}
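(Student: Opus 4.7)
The plan is to build, out of the larger-parameter minimizer $u_b$, an admissible competitor for the obstacle-type problem characterizing $u_a$ in Remark~\ref{rmk:obstacle_ua}, and then invoke the maximality of $u_a$ to get a pointwise comparison between the two minimizers.

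Concretely, I would introduce the auxiliary function
\[
\tilde u := u_b - (b-a)G^{\delta_p},
\]
which is finite off $p$ and tends to $-\infty$ at $p$. Since $u_b \leq bG^{\delta_p}$ by definition of $u_b$, one immediately obtains $\tilde u \leq bG^{\delta_p} - (b-a)G^{\delta_p} = aG^{\delta_p}$. For the second constraint in \eqref{eq:obstacle_ua}, I would take distributional Laplacians, using $\Delta u_b \leq \t_b$ together with $\Delta G^{\delta_p} = \delta_p - V^{-1}\vol$, to get
\[
\Delta \tilde u \;\leq\; \t_b + (b-a)V^{-1} - (b-a)\delta_p \;=\; \t_a - (b-a)\delta_p \;\leq\; \t_a,
\]
where the identity $\t_b + (b-a)V^{-1} = \t_a$ is immediate from the definition of $\t_\cdot$ and the final inequality uses that $\delta_p$ is a non-negative distribution. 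Thus $\tilde u$ satisfies both constraints in \eqref{eq:obstacle_ua} at parameter $a$, and the maximality part of Remark~\ref{rmk:obstacle_ua} gives $\tilde u \leq u_a$, equivalently
\[
u_b \;\leq\; u_a + (b-a)G^{\delta_p} \qquad \text{on } \M\setminus\{p\}.
\]

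To conclude, let $x \in B^h(p,a)$. If $x = p$, then $x \in B^h(p,b)$ by Remark~\ref{rmk:harmonic_ball}. Otherwise $u_a(x) < aG^{\delta_p}(x)$ is a strict inequality between finite numbers, and combining it with the pointwise comparison above yields
\[
u_b(x) \;\leq\; u_a(x) + (b-a)G^{\delta_p}(x) \;<\; aG^{\delta_p}(x) + (b-a)G^{\delta_p}(x) \;=\; bG^{\delta_p}(x),
\]
placing $x$ in $B^h(p,b)$.

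The one delicate point I anticipate is verifying that the maximality in Remark~\ref{rmk:obstacle_ua} applies to the singular competitor $\tilde u$, which has a Green's-function singularity at $p$ and in general does not belong to $W^{1,2}(\M)$. I expect this to be harmless because the characterization borrowed from \cite{GustafssonPartialBalayage} is formulated for distributional subsolutions of the obstacle system and $\tilde u$ is such a subsolution by the computation above; if a strictly admissible competitor is needed, the truncations $\tilde u_k := \max(\tilde u, -k)$ still obey $\tilde u_k \leq aG^{\delta_p}$ and $\Delta \tilde u_k \leq \t_a$ (taking max with a constant preserves the upper bound on the distributional Laplacian), and letting $k\to\infty$ recovers the desired inequality off $p$.
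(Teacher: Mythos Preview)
Your proof is correct and is essentially identical to the paper's: your auxiliary function $\tilde u = u_b - (b-a)G^{\delta_p}$ is literally the same as the paper's competitor $u_b - bG^{\delta_p} + aG^{\delta_p}$, and both arguments verify the two constraints of the obstacle problem \eqref{eq:obstacle_ua} and invoke the maximality of $u_a$ to obtain $aG^{\delta_p}-u_a \leq bG^{\delta_p}-u_b$. The paper does not address the regularity concern you raise about the singular competitor; it simply applies Remark~\ref{rmk:obstacle_ua} directly, so your extra care with truncations is more than what the paper itself provides.
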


\begin{proof} Consider the obstacle problems
\[
(P_a)\ \max u\ :\ \left\{\begin{array}{l}
u\leq aG^{\delta_p},\\
\Delta u \leq \t_a.
\end{array}\right. \qquad (P_b)\ \max u\ :\ \left\{\begin{array}{l}
u\leq bG^{\delta_p},\\
\Delta u \leq \t_b.
\end{array}\right.
\] From Remark \ref{rmk:obstacle_ua}, we know that $\Delta u_b \leq 1-bV^{-1}$, so
\begin{align*}
\Delta(u_b-bG^{\delta_p}+aG^{\delta_p}) &\leq (1-bV^{-1})\vol-b\delta_p+bV^{-1}\vol+a\delta_p-aV^{-1}\vol\\
&= (1-aV^{-1})\vol+(a-b)\delta_p,\\
&\leq (1-aV^{-1})\vol,
\end{align*} because $a \leq b$. Moreover, since $u_b-bG^{\delta_p} \leq 0$, we have that
\[
u_b-bG^{\delta_p}+aG^{\delta_p} \leq aG^{\delta_p}.
\] Hence $u_b-bG^{\delta_p}+aG^{\delta_p}$ is a competing function for the problem $(P_a)$. Since $u_a$ solves $(P_a)$, we conclude that
\[
u_b-bG^{\delta_p}+aG^{\delta_p} \leq u_a,
\] or
\[
aG^{\delta_p}-u_a \leq bG^{\delta_p}-u_b.
\] The proposition follows.
\end{proof}

We are now almost ready to prove Theorem \ref{thm:main1}. First, we will first prove the following simple but useful lemma.

\begin{lemma}\label{lem:lemita} Let $q$ be a point different from $p$. Assume that there is a function $f$, continuous in a neighborhood of $q$, such that
\begin{enumerate}
\item $f(q) = aG^{\delta_p}(q)$,
\item $f \leq aG^{\delta_p}$,
\item $\Delta f \leq \t_a$.
\end{enumerate} Then $q\notin B^h(p,a)$.
\end{lemma}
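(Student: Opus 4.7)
The plan is to argue by contradiction, leveraging the obstacle-problem characterization of $u_a$ recorded in Remark \ref{rmk:obstacle_ua}. Suppose, to the contrary, that $q\in B^h(p,a)$, which by definition means $u_a(q) < aG^{\delta_p}(q)$. Combining this with hypothesis (1) immediately gives the strict inequality $u_a(q) < f(q)$ at the point $q$.

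The central step is to apply Remark \ref{rmk:obstacle_ua}: the function $u_a$ is the largest function satisfying, in the distributional sense, both $u \leq aG^{\delta_p}$ and $\Delta u \leq \t_a$. Hypotheses (2) and (3) of the lemma are exactly these two constraints applied to $f$. The maximality of $u_a$ therefore forces the global comparison $f \leq u_a$.

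To finish, I would upgrade this to a pointwise inequality at $q$. Both $u_a$ (continuous on all of $\M$ by Remark \ref{rmk:harmonic_ball}) and $f$ (continuous near $q$ by hypothesis) admit genuine pointwise values at $q$, so $f \leq u_a$ in the distributional sense yields $f(q) \leq u_a(q)$. This contradicts $u_a(q) < f(q)$ from the first step, and hence $q \notin B^h(p,a)$.

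The main obstacle is to justify the application of Remark \ref{rmk:obstacle_ua} to a function $f$ that is only assumed to be continuous in a neighborhood of $q$ and to satisfy the two distributional bounds. The variational problem defining $u_a$ in Definition \ref{def:harmonic_ball} lives in $W^{1,2}(\M)\cap \K_a$, and a priori a candidate $f$ arising from applications of the lemma (for instance a combination of Green's functions with poles at other configuration points) need not lie in this Sobolev class. The resolution is to invoke the broader Perron/partial-balayage formulation of \cite{GustafssonPartialBalayage}, in which $u_a$ is characterized as the largest admissible distributional subsolution of the obstacle inequalities, so that the comparison $f\leq u_a$ is legitimate under the hypotheses stated.
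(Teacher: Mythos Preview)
Your proposal is correct and follows essentially the same approach as the paper: both arguments invoke the obstacle characterization of $u_a$ from Remark~\ref{rmk:obstacle_ua} to deduce $f \leq u_a$, then use continuity of $f$ and $u_a$ near $q$ together with hypothesis~(1) to conclude $aG^{\delta_p}(q) \leq u_a(q)$. The only cosmetic difference is that you phrase this as a contradiction while the paper proceeds directly; your closing paragraph on the need for the broader distributional formulation from \cite{GustafssonPartialBalayage} is a point the paper leaves implicit.
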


\begin{proof} Since $f \leq aG^{\delta_p}$ and $\Delta f \leq \t_a$, from Remark \ref{rmk:obstacle_ua} we know that $f \leq u_a$. Since both $f$ and $u_a$ are continuous in a neighborhood of $q$, the inequality holds pointwise around $q$. Therefore,
\[
aG^{\delta_p}(q) = f(q) \leq u_a(q)
\] and the lemma follows.
\end{proof}

\begin{proof}[Proof of Theorem \ref{thm:main1}] By symmetry, it is enough to show that $x^*_{N-1}\notin B^h\left(x_N^*,\frac{V}{N-1}\right)$. Consider the function
\begin{align*}
h(x) &= \frac{V}{N-1}\left(G^{\delta_{x_N^*}}(x) + \sum_{i=1}^{N-2}G(x_i^*,x)\right)\\
&= \frac{V}{N-1}\left(E_G(x_1^*, ..., x_{N-2}^*,x,x_N^*)-\sum_{i\neq j}^{N-2}G(x_i^*,x_j^*)- \sum_{i=1}^{N-2}G(x_i^*,x_N^*)\right)
\end{align*} Since $\{x_1^*, ..., x_N^*\}$ is a minimizing configuration for $E_G$, the point $x_{N-1}^*$ is a global minimum for $h$. Hence, if we set
\[
f(x) = h(x_{N-1}^*)-\frac{V}{N-1}\sum_{i=1}^{N-2}G(x_i^*,x),
\] then $f$ is continuous in a neighborhood of $x_{N-1}^*$ and
\[
\frac{V}{N-1}G^{\delta_{x_N^*}}(x) \geq f(x)
\] for every $x\in\M$, with equality if $x = x_{N-1}^*$. Moreover,
\[
\Delta f = -\frac{V}{N-1}\sum_{i=1}^{N-2}\Delta G^{\delta_{x_i^*}} \leq \frac{N-2}{N-1}\vol = \left(1-\frac{1}{N-1}\right)\vol.
\] The proof concludes by applying Lemma \ref{lem:lemita} with $a = \frac{V}{N-1}$, $p = x_N^*$  and $q = x_{N-1}^*$.
\end{proof}

\section{Separation distance for symmetric spaces}\label{sec:cross}

In this section we will prove Theorem \ref{thm:main2}. The MVP \eqref{eq:mvp} is not true in general if one replaces $B^h(p,a)$ by some geodesic ball, and thus harmonic balls and geodesic balls are not the same subsets of $\M$ in the general case. There are some manifolds, however, for which the MVP for geodesic balls holds true. These are called \emph{locally harmonic manifolds}. We say that $\M$ is locally harmonic at a point $p$ if every sufficiently small geodesic sphere around $p$ has constant mean curvature. In this article we will be using an alternative definition of local harmonicity involving volume density.

\begin{remark}\label{rmk:injectivity} Recall that if $x\in\M$ is any point, and if $r>0$ is sufficiently small, we can define normal coordinates $\exp_x:\mathbb{B}_r \ra B(x,r)$ on the geodesic ball around $x$, where $\mathbb{B}_r = \{v\in\R^n : \|v\| = r\}$ is the ball centered at the origin of $T_x\M$ (which we identify with $\R^n$) with radius $r$. Normal coordinates have the advantage that they preserve the metric in the radial directions, in the sense that for every $v\in \mathbb{B}_r$ we have that $d(x,\exp_xv) = \|v\|$. The maximal $r>0$ such that $\exp_x:\mathbb{B}_r\ra B(x,r)$ is a diffeomorphism is called the \emph{injectivity radius} of $x$, and it is denoted by $\inj(x)$. The \emph{injectivity radius} of $\M$ is the number
\[
\inj(\M) = \inf_{x\in\M}\inj(x).
\] If $\M$ is compact, then $\inj(\M)$ is always positive, so we can use normal coordinates on $B(x,r)$ for every $0 < r < \inj(\M)$ not depending on $x$.

Another important concept is that of cut locus. The \emph{cut locus} of a point $x\in\M$ is the set of points $y\in\M\setminus\{x\}$ such that $y$ is conjugate to $x$ (see, for example, \cite[Definition 4.3.1]{JostRiemannian}) or two different minimizing geodesics emanating from $x$ arrive at $y$. We denote the cut locus of $x$ by $\text{Cut}(x)$. It can be shown that $\text{Cut}(x)$ has measure zero and that $d(x,\text{Cut}(x)) = \inj(x)$.

All these facts, which will be relevant when we define locally harmonic and Blaschke manifolds, are covered for instance in \cite{PetersenRiemannian}.
\end{remark}

\begin{definition}\label{def:volume_density} Let $\M$ be a Riemannian manifold, and let $x\in\M$ be any point. The \emph{volume density} $\omega(x,y) = \omega_x(y)$ is a continuous function on $\M\times\M$ whose local expression in normal coordinates around $x$ is
\begin{equation}\label{eq:local_expression_vol_dens}
\omega_x(y) = \sqrt{|g|}(y),
\end{equation} where $|g|$ is the determinant of the metric (see \cite[6.3]{Besse} or \cite{Kreyssig} for a coordinate--free definition).
\end{definition}

The following proposition can be found in \cite[Proposition 2.5]{CriadoDelReyDiscreteAndContinuous}. It summarizes some of the properties of the volume density.

\begin{proposition}\label{prop:prpiedades_volume_density} The volume density satisfies the following properties:
\begin{enumerate}
\item $\omega_x$ is smooth in any normal neighborhood around $x$.
\item $\omega_x(x) = 1$.
\item $\omega_x(y) > 0$ if $d(x,y)<\textup{inj}(x)$.
\item $\omega_x(y) = 0$ if and only if $y$ is conjugate to $x$.
\item $\omega_x(y) = \omega_y(x)$.
\end{enumerate}
\end{proposition}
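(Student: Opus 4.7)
Properties (1)--(4) are direct consequences of the local coordinate expression $\omega_x(y)=\sqrt{|g|(y)}$ in normal coordinates around $x$, combined with standard facts about the exponential map. For (1), in a normal neighborhood the metric components are smooth functions of the coordinates, so $\sqrt{|g|}$ is smooth. For (2), at the origin of normal coordinates one has $g_{ij}(x)=\delta_{ij}$ and hence $|g|(x)=1$. For (3), within the injectivity radius the exponential map $\exp_x$ is a diffeomorphism, so its differential is non-singular at every $v$ with $\|v\|<\inj(x)$, and $|g|>0$ follows. For (4), I would identify $\omega_x(\exp_x v)$ (up to a non-vanishing factor) with $|\det d\exp_x|_v|$; via the classical identification between $d\exp_x|_v$ and the endpoint evaluation of Jacobi fields vanishing at $x$, this determinant vanishes precisely when there is a non-zero Jacobi field along the radial geodesic from $x$ to $y$ vanishing at both endpoints, which is the definition of conjugacy.

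The symmetry statement (5) is the main obstacle. My plan is to parametrize the minimizing unit-speed geodesic from $x$ to $y$ as $\gamma\colon[0,L]\to\M$, pick a parallel orthonormal frame along $\gamma$ whose last vector is $\dot\gamma$, and encode the normal Jacobi equation in the $(n-1)\times(n-1)$ matrix form $A''+R(t)A=0$, where $R(t)$ is the curvature operator in this frame. Solving with $A(0)=0$, $A'(0)=I$ yields the Jacobi endpoint matrix, and unwinding the definition gives $\omega_x(y)=L^{-(n-1)}\det A(L)$. Performing the analogous construction on the time-reversed geodesic $\tilde\gamma(t)=\gamma(L-t)$ with solution $\tilde A$, I would set $B(t)=\tilde A(L-t)$ and observe that, because curvature depends only on the point traversed and not on the direction, $B$ satisfies the same equation $B''+R(t)B=0$ as $A$, but now with the boundary data $B(L)=0$, $B'(L)=-I$.

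To conclude I would exploit the conserved Wronskian $W(t)=A(t)^{T}B'(t)-A'(t)^{T}B(t)$: symmetry of $R(t)$ forces $W'\equiv 0$, and evaluating at $t=0$ and $t=L$ produces the identity $\tilde A(L)=A(L)^{T}$. Taking determinants yields $\omega_y(x)=\omega_x(y)$ whenever $y$ lies in the injectivity domain of $x$ and is not conjugate to $x$. Since both sides are by hypothesis continuous on $\M\times\M$, and the complement of this ``good'' set has empty interior (cf. Remark~\ref{rmk:injectivity}), the identity extends by density to the whole product. The only finicky point is being sure that the matrix identities above genuinely represent $\omega_x(y)$ and $\omega_y(x)$ in comparable frames, which amounts to the observation that parallel transport along $\gamma$ from $x$ to $y$ is the inverse of parallel transport along $\tilde\gamma$ from $y$ to $x$, so the two determinants are indeed computed with respect to compatible orientations.
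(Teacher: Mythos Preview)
The paper does not actually prove this proposition: immediately before the statement it reads ``The following proposition can be found in \cite[Proposition 2.5]{CriadoDelReyDiscreteAndContinuous},'' and no argument is given. So there is no in-paper proof to compare against; your proposal supplies a self-contained proof where the paper simply defers to a reference.

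That said, your argument is sound. Items (1)--(4) are indeed immediate from the local expression $\omega_x=\sqrt{|g|}$ in normal coordinates together with the Jacobi-field description of $d\exp_x$. For the symmetry (5), your Wronskian computation is the classical one (it appears, for instance, in Besse \cite[Ch.~6]{Besse}): with $A''+RA=0$, $A(0)=0$, $A'(0)=I$, and $B(t)=\tilde A(L-t)$ satisfying the same equation with $B(L)=0$, $B'(L)=-I$, the constancy of $W=A^{T}B'-(A')^{T}B$ evaluated at $t=0$ and $t=L$ gives $\tilde A(L)=A(L)^{T}$, hence equal determinants. The density-plus-continuity extension to all of $\M\times\M$ is legitimate because the paper's Definition~\ref{def:volume_density} posits $\omega$ continuous on the whole product and the set where both $y\notin\mathrm{Cut}(x)$ and $x\notin\mathrm{Cut}(y)$ is open and dense. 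One minor remark: within the injectivity radius there are no conjugate points, so in your penultimate paragraph the clause ``and is not conjugate to $x$'' is redundant.
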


\begin{definition}\label{def:lh} Let $\M$ be a Riemannian manifold, and let $x\in\M$ be any point. We will say that $\M$ is \emph{locally harmonic at $x$} if there exists an $\eps > 0$ such that $\omega_x$ is radially symmetric on $B(x,\eps)$. In other words, if there is a function $\Omega_x:[0,\eps)\ra\R$ such that $\omega_x(y) = \Omega_x(d(x,y))$ for every $y\in B(x,\eps)$. We will say that $\M$ is \emph{locally harmonic} if it is locally harmonic at $x$ for every $x\in\M$.
\end{definition}

See \cite[Chapter 6]{Besse} for more on locally harmonic manifolds. It is easy to see that the Euclidean space $\R^n$ is locally harmonic with $\Omega_x(r) \equiv 1$. The so--called Compact Rank One Symmetric Spaces (CROSS) are also locally harmonic manifolds. These are the sphere $\S^n$, the projective spaces $\R\mathbb{P}^m$, $\C\mathbb{P}^m$, $\mathbb{H}\mathbb{P}^m$ and the Cayley plane $\mathbb{O}\mathbb{P}^2$. We will also be using the concept of a Blaschke manifold.

\begin{definition} We will say that $\M$ is a \emph{Blaschke manifold} if $\M$ is compact and if the injectivity radius of $\M$ coincides with its diameter.
\end{definition}

As the next result shows, the Green's function on a locally harmonic Blaschke manifold is radially symmetric and thus it is easier to study. The proof can be found in \cite{CriadoDelReyDiscreteAndContinuous}.

\begin{proposition}\label{thm:green_phi} Let $\M$ be a locally harmonic Blaschke manifold of diameter $D$. Then the Green's function is given by $G(x,y) = \phi(d(x,y))$, where
\begin{equation}\label{eq:phiprima}
\phi'(r) = -\frac{V^{-1}\int_r^D v(t)dt}{v(r)}
\end{equation} and $\phi$ is a primitive of $\phi'$ making $\int_\M \phi(d(x,y)) = 0$. Here $v(r)$ denotes the volume of the geodesic sphere of radius $r$, which in the case of locally harmonic Blaschke manifolds does not depend on the center of the sphere (see \cite[Remark A6]{CriadoDelReyDiscreteAndContinuous}).
\end{proposition}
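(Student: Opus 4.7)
The plan is to prove the proposition by direct verification. Define $f(y) := \phi(d(x,y))$, where $\phi$ is a primitive of \eqref{eq:phiprima} with additive constant chosen so that $\int_\M f\,\dvol = 0$, and check that $f$ satisfies the defining equation \eqref{eq:def_green_function} of the Green's function centered at $x$. Uniqueness of the Green's function under this normalization then forces $G(x,y) = \phi(d(x,y))$.

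I would work in geodesic polar coordinates $(r,\theta)$ around $x$. By the Blaschke hypothesis, $\inj(x) = D$, so $\exp_x$ is a diffeomorphism from $\mathbb{B}_D\setminus\{0\}$ onto $\M\setminus(\{x\}\cup\text{Cut}(x))$ and $\text{Cut}(x)$ has Riemannian measure zero. Local harmonicity forces the volume density to be a radial function $\omega_x(y) = \Omega(r)$ on this chart, so the volume form reads $\Omega(r)r^{n-1}\,dr\,d\sigma(\theta)$ with $d\sigma$ the standard measure on $S^{n-1}$, and consequently $v(r) = c_{n-1}\Omega(r)r^{n-1}$, where $c_{n-1}$ is the area of $S^{n-1}$. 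A standard coordinate-free divergence calculation then gives, for $r = d(x,y)$ and $y\in \M\setminus(\{x\}\cup\text{Cut}(x))$,
\[
\Delta f(y) = -\frac{1}{v(r)}\frac{d}{dr}\bigl(v(r)\phi'(r)\bigr),
\]
and substituting $\phi'$ from \eqref{eq:phiprima} gives $v(r)\phi'(r) = -V^{-1}\int_r^D v(t)\,dt$, so that $\Delta f = -V^{-1}$ pointwise off $\{x\}\cup\text{Cut}(x)$.

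To upgrade this pointwise identity to the distributional equality $\Delta f = \delta_x - V^{-1}\vol$, I would handle the two exceptional sets separately. Near $x$, $v(r)\sim c_{n-1}r^{n-1}$ and $\int_0^D v(t)\,dt = V$, so $\phi'(r)\sim -(c_{n-1}r^{n-1})^{-1}$, precisely the radial derivative of the Euclidean fundamental solution; the delta contribution at $x$ then emerges from the standard argument of integrating $\int\phi(d(x,\cdot))\Delta\eta\,\dvol$ by parts on $\M\setminus B(x,\varepsilon)$ against a test function $\eta$ and sending $\varepsilon\to 0^+$, which produces $\eta(x)$ in the limit. At the cut locus, $f$ is continuous and bounded, $\text{Cut}(x)$ has measure zero, and $\phi'(r)\to 0$ as $r\to D^-$ (by the order of vanishing of the numerator in \eqref{eq:phiprima}), which prevents any normal-derivative jump capable of producing a singular distributional term on $\text{Cut}(x)$. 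Finally, the additive constant in $\phi$ is chosen to enforce $\int_\M \phi(d(x,\cdot))\,\dvol = 0$, a condition independent of $x$ because in polar coordinates the integral reduces to $\int_0^D \phi(r)v(r)\,dr$. The main obstacle is the cut-locus analysis, where one must rigorously rule out a singular contribution to $\Delta f$ supported on $\text{Cut}(x)$; this is controlled by the radial regularity of $f$ at $r = D$ encoded in $\phi'(D^-) = 0$.
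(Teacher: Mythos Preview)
The paper does not prove this proposition; immediately after stating it, the text reads ``The proof can be found in \cite{CriadoDelReyDiscreteAndContinuous}.'' So there is no in-paper argument to compare against. Your approach---defining the candidate $f(y)=\phi(d(x,y))$, computing $\Delta f$ in geodesic polar coordinates via the radial formula $\Delta f=-v(r)^{-1}\partial_r\bigl(v(r)\phi'(r)\bigr)$, and then upgrading to the distributional identity by handling the singularity at $x$ and the cut locus separately---is the natural direct verification and is almost certainly what the cited reference does.

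One point deserves more care. Your claim that $\phi'(r)\to 0$ as $r\to D^-$ ``by the order of vanishing of the numerator'' is immediate only when $v(D^-)>0$, as happens for $\R\mathbb{P}^m$. On $\S^n$, $\C\mathbb{P}^m$, $\mathbb{H}\mathbb{P}^m$ and $\mathbb{O}\mathbb{P}^2$ the denominator $v(r)$ also tends to $0$ at $r=D$, so \eqref{eq:phiprima} is an indeterminate form there; you need to argue (via L'H\^opital, or directly from the explicit expansions $v(r)\sim c(D-r)^k$ coming from the volume densities listed in the proof of Theorem~\ref{thm:main2}) that $\int_r^D v(t)\,dt$ vanishes to strictly higher order than $v(r)$. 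Once $\phi'(D^-)=0$ is established, $\nabla f=\phi'(r)\nabla r$ extends continuously by zero across $\text{Cut}(x)$, and the boundary term from excising a tubular neighborhood of $\text{Cut}(x)$ vanishes because $\sup_{\partial N_\delta}|\phi'(r)|\to 0$ while the surface measure of $\partial N_\delta$ stays bounded; it is worth stating this last step explicitly rather than leaving it implicit.
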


We now present a result similar to \cite[Theorem 14]{GustafssonPartialBalayage} for locally harmonic Blaschke manifolds of any dimension.

\begin{theorem}\label{thm:harmonic_geodesic} Let $\M$ be a locally harmonic Blaschke manifold of dimension $n \geq 2$ and diameter $D$. Then, for every $0 < r < D$, we have that
\[
B(p,r) = B^h(p,V(r)),
\] where $V(r) = \textup{vol}(B(p,r))$ is the volume of the geodesic ball (which does not depend on the center of the ball for locally harmonic Blaschke manifolds).
\end{theorem}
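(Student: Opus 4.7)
The plan is to exhibit an explicit radial candidate $u$ for the minimizer $u_a$ with $a = V(r)$ that coincides with $aG^{\delta_p}$ outside $B(p,r)$, and then identify it with $u_a$ via the variational characterization, so that $B^h(p,a) = \{u_a < aG^{\delta_p}\}$ reads off as $B(p,r)$. Since $\M$ is locally harmonic and Blaschke, Proposition \ref{thm:green_phi} gives $G^{\delta_p}(y) = \phi(d(p,y))$, and since $r < D = \inj(\M)$, normal coordinates around $p$ are available on a neighborhood of $\overline{B(p,r)}$, so all radial computations below are legitimate.

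I would define $u$ piecewise by
\[
u(y) = \begin{cases} \psi(d(p,y)), & y \in B(p,r),\\ aG^{\delta_p}(y), & y \notin B(p,r), \end{cases}
\]
where the radial profile $\psi$ is the solution of $\Delta\psi = \t_a$ on $B(p,r)$ that is smooth at $p$. On a locally harmonic manifold, this reduces to the ODE $(v(\rho)\psi'(\rho))' = -\t_a v(\rho)$, whose integration from $0$ (with $v(0)\psi'(0)=0$) gives $\psi'(\rho) = -\t_a V(\rho)/v(\rho)$, where $V(\rho) = \int_0^\rho v(t)\,dt$ is the volume of the geodesic ball of radius $\rho$; the remaining constant in $\psi$ is fixed by imposing the continuity condition $\psi(r) = a\phi(r)$. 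Using Proposition \ref{thm:green_phi} in the form $a\phi'(r) = -aV^{-1}(V - V(r))/v(r)$, a short algebraic check shows that the radial derivatives also match at the boundary, $\psi'(r) = a\phi'(r)$, \emph{precisely} when $a = V(r)$. Hence $u$ is $C^1$ across $\partial B(p,r)$, so its distributional Laplacian on $\M$ carries no singular interface term: it equals $\t_a$ on $B(p,r)$ and $-aV^{-1} < \t_a$ on the complement (away from $p$). A direct computation gives $(\psi - a\phi)'(\rho) = (a - V(\rho))/v(\rho) > 0$ for $0 < \rho < r$, which, together with $\psi(0)<+\infty = a\phi(0)$, yields $u < aG^{\delta_p}$ strictly throughout $B(p,r)$.

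To identify $u$ with $u_a$, I would test $u$ against any competitor $v \in \K_a$. By Green's identity and the pointwise formula for $\Delta u$,
\[
J_a(v) - J_a(u) = \int_\M |\nabla(v-u)|^2 + 2\int_\M (v-u)(\Delta u - \t_a) = \int_\M |\nabla(v-u)|^2 - 2\int_{\M\setminus B(p,r)}(v - aG^{\delta_p}),
\]
which is a sum of two non-negative terms (the second because $v \leq aG^{\delta_p}$ pointwise). Therefore $u$ minimizes $J_a$, and by uniqueness $u = u_a$, from which $B^h(p,V(r)) = \{u < aG^{\delta_p}\} = B(p,r)$ follows.

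The step I expect to be the main obstacle is bookkeeping rather than conceptual: showing that the algebraic identity encoding $\psi'(r) = a\phi'(r)$ collapses exactly to $a = V(r)$, and justifying that the piecewise-defined $u$ has distributional Laplacian equal to its pointwise Laplacian (which is precisely what the $C^1$ matching secures). The radial symmetry of $G$ provided by Proposition \ref{thm:green_phi} is what makes the rest of the construction tractable.
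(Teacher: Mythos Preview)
Your proof is correct, and while it arrives at the same candidate function as the paper (there is only one, after all), the route is genuinely different in two places.

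\emph{Construction of the candidate.} The paper defines the candidate as the shifted potential $b_r = h_r - \text{const}$, where $h_r(x) = \int_{B(p,r)} G(x,y)\,\dvol(y)$, and then invokes the auxiliary Lemma~\ref{lem:fr} on spherical potentials to compute $b_r$ explicitly inside and outside $B(p,r)$. You instead write down the radial ODE $(v\psi')' = -\t_a v$ directly and integrate, observing that the $C^1$ matching $\psi'(r) = a\phi'(r)$ collapses to the single condition $a = V(r)$. Both produce the same function, but your construction is self-contained and bypasses Lemma~\ref{lem:fr} entirely (whose proof in the paper requires some care: continuity of $f_r$ via dominated convergence and Dini's theorem).

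\emph{Identification with $u_a$.} The paper proceeds in two steps: first it uses the obstacle characterization (Remark~\ref{rmk:obstacle_ua}, packaged as Lemma~\ref{lem:lemita}) to obtain the inclusion $B^h(p,V(r))\subseteq B(p,r)$, and then combines the Maximum Principle with maximality of $u_a$ among subsolutions to conclude $b_r = u_a$. Your one-line variational computation $J_a(v) - J_a(u) = \int_\M |\nabla(v-u)|^2 - 2\int_{\M\setminus B(p,r)}(v - aG^{\delta_p}) \geq 0$ does this in a single stroke, exploiting only that $\Delta u - \t_a$ vanishes on $B(p,r)$ and equals $-1$ on the complement.

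What each buys: your argument is shorter and more elementary; the paper's stays within the potential-theoretic language of partial balayage and reuses Lemma~\ref{lem:lemita}, which it needs anyway for Theorem~\ref{thm:main1}. One minor remark on your write-up: the line ``together with $\psi(0)<+\infty = a\phi(0)$'' is not needed, since $(\psi - a\phi)' > 0$ on $(0,r)$ and $(\psi - a\phi)(r)=0$ already force $\psi < a\phi$ on $(0,r)$.
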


The proof of Theorem \ref{thm:harmonic_geodesic} will require two lemmas. In what follows we will adopt the following notations: for a compact manifold $\M$, $dS_r$ will denote the induced Riemannian measure on the geodesic sphere $S(p,r)$, and $v_p(r) = \int_{S(p,r)}dS_r$ will denote the volume of the sphere. We will denote the uniform probability measure on $S(p,r)$ by $\sigma_r$ (so $\sigma_r = dS_r/v_p(r)$). Finally, we will denote by $V_p(r) = \vol(B(p,r))$ the volume of the geodesic ball around $p$.

\begin{remark} If $x\in\M$ is any point, then for any $y\in \text{Cut}(x)$ we have the inequalities
\[
\inj(\M) \leq \inj(x) \leq d(x,y) \leq \diam(\M).
\] (see Remark \ref{rmk:injectivity}). If $\M$ is Blaschke, then all of the above quantities coincide, and so for every continuous function $f:\M\ra\R$,
\[
\int_\M f(x)\dvol(x) = \int_0^D \int_{x\in S(p,r)}f(x)dS_r(x),
\] where $p\in\M$ is any point, since $\text{Cut}(p)$ has measure zero.
\end{remark}

The following result is just a remark about integration over spheres on locally harmonic manifolds.

\begin{lemma}\label{lem:515}Let $\M$ be a compact manifold, and let $p\in\M$ be any point. Assume that there exist a real number $\eps$ with $0 < \eps \leq \textup{inj}(\M)$ and a function $\Omega_p:[0,\eps)\ra \R$ such that $\omega_p(x) = \Omega_p(d(p,x))$ for every $x\in B(p,\eps)$. Then, for every integrable function $f:\M\ra\R$ and for every $0 < r < \eps$, we have that
\begin{equation}\label{eq:rmk_integration}
\int_{y\in S(p,r)}f(y)dS_r(y) = \int_{\theta\in\S^{n-1}}f(\exp_p r\theta)r^{n-1}\Omega_p(r)d\theta,
\end{equation} In particular, taking $f\equiv 1$, one has
\[
v_p(r) = \textup{vol}(\S^{n-1})r^{n-1}\Omega_p(r).
\]
\end{lemma}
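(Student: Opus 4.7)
The plan is to translate the integral over $S(p,r)$ into an integral over the Euclidean sphere of radius $r$ in $T_p\M$ via the exponential map, then pass to polar coordinates and exploit the radial symmetry of $\omega_p$.

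Since $r < \eps \leq \inj(\M) \leq \inj(p)$, the map $\exp_p : \mathbb{B}_\eps \to B(p,\eps)$ is a diffeomorphism (see Remark \ref{rmk:injectivity}), and in these normal coordinates the Riemannian volume form of $\M$ is represented, by \eqref{eq:local_expression_vol_dens}, by the density $\omega_p(y)\, dy$, where $dy$ denotes Lebesgue measure on $T_p\M \cong \R^n$. The geodesic sphere $S(p,r)$ is the image under $\exp_p$ of the Euclidean sphere $\{v\in T_p\M : \|v\|=r\}$, because normal coordinates preserve distance in the radial direction.

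First I would switch to polar coordinates $y = s\theta$ with $s>0$ and $\theta\in\S^{n-1}$. In these coordinates Lebesgue measure factors as $dy = s^{n-1}\,ds\, d\theta$, and by the hypothesis that $\omega_p$ is radial on $B(p,\eps)$ we have $\omega_p(s\theta) = \Omega_p(s)$. Hence, on $B(p,\eps)\setminus\{p\}$,
\[
\dvol = \Omega_p(s)\, s^{n-1}\, ds\, d\theta.
\]
By the Gauss lemma, the distance function $\rho(x) = d(p,x)$ has $\|\nabla\rho\| = 1$ on $B(p,\eps)\setminus\{p\}$ and its level sets are precisely the geodesic spheres $S(p,s)$. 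Applying the coarea formula (equivalently, disintegrating $\dvol$ along the level sets of $\rho$) yields
\[
\int_{B(p,\eps)} f\, \dvol = \int_0^\eps \left(\int_{S(p,s)} f\, dS_s\right) ds = \int_0^\eps \left(\int_{\S^{n-1}} f(\exp_p s\theta)\, \Omega_p(s)\, s^{n-1}\, d\theta\right) ds.
\]
Since this identity holds for every integrable $f$, the inner integrals must agree for almost every $s \in (0,\eps)$; by continuity in $s$ of both sides (when $f$ is, say, continuous, and by a standard approximation argument for general integrable $f$) they in fact agree for every $0 < s < \eps$, which gives \eqref{eq:rmk_integration} at $s=r$. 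Finally, taking $f\equiv 1$ yields $v_p(r) = \textup{vol}(\S^{n-1})\, r^{n-1}\, \Omega_p(r)$.

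There is no real obstacle: the only points that require care are (i) ensuring $\exp_p$ is a diffeomorphism on the relevant ball, which is handled by the bound $\eps \leq \inj(\M)$, (ii) invoking the Gauss lemma to guarantee orthogonality between the radial direction and geodesic spheres so that the coarea formula applies with factor $1$, and (iii) passing from an a.e.~equality in $s$ to a pointwise equality, which follows from the continuity of $\Omega_p$ and of $dS_r$ in $r$ on a normal neighborhood.
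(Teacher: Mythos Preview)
Your argument is correct and follows essentially the same route as the paper: pull back to normal coordinates, use that the volume form becomes $\Omega_p(s)\,s^{n-1}\,ds\,d\theta$, and then identify the sphere integral as the radial slice of the ball integral. The only cosmetic difference is that the paper writes the ball integral as $\int_0^r(\cdots)\,dt$ for variable $r$ and differentiates in $r$, which directly yields the pointwise identity and sidesteps your a.e.-plus-continuity step.
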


\begin{proof} Let us denote by $\S^n(t) = \{v\in\R^{n+1} : \|v\| = t\}$ the Euclidean sphere of dimension $n$ and radius $t$ together with its standard measure. Observe that
\begin{align*}
\int_0^r\int_{\theta\in \S^{n-1}}f(\exp_p t\theta)t^{n-1}\Omega_p(t)d\theta dt &= \int_0^r \int_{\theta\in \S^{n-1}(t)}f(\exp_p\theta)\Omega_p(t)d\theta dt\\
&= \int_0^r \int_{\theta\in \S^{n-1}(t)}f(\exp_p\theta)\sqrt{|g|}(\exp_p\theta)d\theta dt\\
&= \int_{B(p,r)}f(y)\dvol(y)\\
&= \int_0^r \int_{y\in S(p,t)}dS_r(y)dt.
\end{align*} The result follows by taking the derivative with respect to $r$.
\end{proof}

In the next lemma we provide an explicit expression for the potential of the uniform measure on a geodesic sphere in the case of locally harmonic manifolds.

\begin{lemma}\label{lem:fr} Let $\M$ be a compact manifold, and let $p\in\M$ be any point. Assume that there exist a real number $\eps$, with $0 < \eps \leq \textup{inj}(\M)$, and a function $\Omega_p:[0,\eps)\ra \R$ such that $\omega_p(x) = \Omega_p(d(p,x))$ for every $x\in B(p,\eps)$. Then, for every $0 < r < \eps$, the function
\[
f_r(x) = \int_{y\in S(p,r)}G(x,y)d\sigma_r(y),
\] is continuous and, moreover,
\[
f_r(x) = \left\{\begin{array}{ll}
G^{\delta_p}(x)+V^{-1}\int_0^r \frac{V_p(u)}{v_p(u)}du & \text{ if }x\in\M\setminus B(p,r),\\
& \\
G^{\delta_p}(z)+V^{-1}\int_0^r\frac{V_p(u)}{v_p(u)}du-\int_{d(p,x)}^r \frac{du}{v_p(u)} & \text{ if }x\in B(p,r).
\end{array}\right.
\] where $z\in S(p,r)$ is an arbitrary point.
\end{lemma}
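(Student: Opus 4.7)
The plan is to handle continuity by a direct integrability argument and to derive the explicit formula via Green's second identity applied on $B(p,r)$ with $u(y)=G(x,y)$ and a carefully chosen radial auxiliary function. Since the Green's function has at worst a $d(x,y)^{2-n}$ (or logarithmic, for $n=2$) singularity on the diagonal, $G(x,\cdot)$ lies in $L^1(\M)$ with integrals controlled uniformly in $x$. Hence $f_r$ is continuous on $\M$ by dominated convergence, once one checks that the singularity of $G$ is integrable against the $(n-1)$-dimensional surface measure of $S(p,r)$, which follows by passing to normal coordinates around any fixed $x\in S(p,r)$.

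For the explicit formula I would introduce
\[
v(y)=\int_{d(p,y)}^{r}\frac{dt}{v_p(t)},\qquad y\in B(p,r).
\]
Lemma \ref{lem:515}, combined with the radiality of $\omega_p$ on $B(p,\eps)$, gives the formula $\Delta h(s) = -\frac{1}{v_p(s)}\bigl(v_p(s)h'(s)\bigr)'$ for the Laplacian of any radial function of $s=d(p,\cdot)$. From it one reads off that $v$ is harmonic on $B(p,r)\setminus\{p\}$, vanishes on $S(p,r)$, and satisfies $\partial_\nu v = -1/v_p(r)$ on $S(p,r)$ (outward normal). A flux computation through a small geodesic sphere around $p$ identifies $\Delta v = \delta_p$ distributionally on $B(p,r)$, and Fubini yields
\[
\int_{B(p,r)} v\,\dvol = \int_{0}^{r}\frac{V_p(t)}{v_p(t)}\,dt,
\]
which is precisely $V$ times the additive constant appearing in the statement.

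Green's second identity (with our sign convention $\Delta=-\div\nabla$) then reads
\[
\int_{B(p,r)} (v\Delta u - u\Delta v)\,\dvol = -\int_{S(p,r)} (v\,\partial_\nu u - u\,\partial_\nu v)\,dS_r.
\]
The boundary term collapses, using $v=0$ and $\partial_\nu v=-1/v_p(r)$ on $S(p,r)$, to exactly $-f_r(x)$. On the left, $\Delta_y G(x,y) = \delta_x - V^{-1}\vol$ implies that $\int_{B(p,r)}v\Delta u\,\dvol$ equals $-V^{-1}\int_{B(p,r)} v\,\dvol$ when $x\notin B(p,r)$ and acquires an additional $v(x)$ when $x\in B(p,r)$, whereas $\int_{B(p,r)}u\Delta v\,\dvol = u(p)=G^{\delta_p}(x)$ always, by symmetry of $G$. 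Rearranging in each of the two regimes yields the claimed explicit expressions, and continuity across $S(p,r)$ comes for free by comparing them at $d(p,x)=r$, where $G^{\delta_p}$ is constant on the sphere in the radial/locally-harmonic setting.

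The main technical subtlety is that the auxiliary function $v$ is unbounded at $p$ and, when $x\in B(p,r)$, so is $u$ at $y=x$. I would treat this in the standard way by carrying out Green's identity on $B(p,r)\setminus(B(p,\delta)\cup B(x,\delta))$ and letting $\delta\to 0$; the known local asymptotics of $v$ as a radial fundamental solution and of $G$ as the Green's function force the two excised boundary pieces to converge to $-G^{\delta_p}(x)$ and $v(x)$ respectively, reproducing the distributional identities $\Delta v=\delta_p$ and $\Delta_y G(x,y)\supset\delta_x$ used informally in the previous paragraph.
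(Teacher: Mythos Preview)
Your argument is correct and takes a genuinely different route from the paper's. For the explicit formula, the paper fixes $x$, views $F(u)=\int_{S(p,u)}G(x,y)\,d\sigma_u(y)$ as a function of the sphere radius, differentiates it via Green's identity to obtain $v_p(u)F'(u)$ as a flux, and then integrates $F'$ back from $0$ (when $x\notin B(p,r)$) or from $t=d(p,x)$ (when $x\in B(p,r)$). Your approach is a single application of Green's second identity on $B(p,r)$ against the explicit local fundamental solution $v(y)=\int_{d(p,y)}^{r}v_p(t)^{-1}\,dt$; the boundary term collapses immediately to $-f_r(x)$ and the volume terms give the constants. This one-shot method is arguably cleaner and avoids the ODE step. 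For continuity, the paper truncates $G$ from below via Tietze and then invokes Dini's theorem, whereas you appeal to dominated convergence after checking the $(n{-}1)$-dimensional integrability of the diagonal singularity; both work, though the dominating function when $x_n\to x\in S(p,r)$ deserves one more line than you give it (the singularity moves with $x_n$, so a near/far splitting is needed). One small caveat: your computation naturally produces $G^{\delta_p}(x)$ in the interior formula, not $G^{\delta_p}(z)$ for $z\in S(p,r)$; the assertion that $G^{\delta_p}$ is constant on $S(p,r)$ does not follow from the mere radiality of $\omega_p$ (the Green's function is a global object). The two expressions agree in the locally harmonic Blaschke setting where the lemma is actually applied (Proposition~\ref{thm:green_phi}), and the paper's own derivation has the same feature, so this is not a defect of your argument.
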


\begin{proof} Let $t = d(p,x)$. Consider the function
\[
F(u) = \int_{S(p,u)}G(x,y)d\sigma_u(y), \qquad u\in (0,\eps),
\] and observe that $f_r(x) = F(r)$. By taking normal coordinates around $p$, if $u\neq r$, we can use \eqref{eq:rmk_integration} in Lemma \ref{lem:515} to obtain that
\begin{align*}
F(u) &= \frac{1}{\int_{\theta\in\S^{n-1}}u^{n-1}\Omega_p(u)d\theta}\int_{\theta\in \S^{n-1}}G(x,\exp_p u\theta)u^{n-1}\Omega_p(u)d\theta\\
&= \frac{1}{\vol(\S^{n-1})}\int_{\theta\in \S^{n-1}}G(x,\exp_p u\theta)d\theta.
\end{align*} Hence, if $u\neq r$, then
\[
F'(u) = \frac{1}{\vol(\S^{n-1})}\int_{\theta\in \S^{n-1}}\frac{d}{du}G(x,\exp_p u\theta)d\theta = \int_{S(p,u)}\nabla_N G(x,y)d\sigma_u(y),
\] where $N$ is the outward unit normal vector field on $S(p,u)$.

Suppose that $x\in\M\setminus \overline{B(p,r)}$, so that $r < t$. Then there exists $\delta > 0$ such that, for every $u\in (0,r+\delta)$, the point $x$ does not belong to $\overline{B(p,u)}$. By Green's Second Identity, we get
\[
v_p(u)F'(u) = -\int_{y\in B(p,u)}\Delta G(x,y)\dvol(y) = V^{-1}V_p(u), \qquad u\in (0,r+\delta).
\] Moreover, by continuity of the Green's function,
\[
\lim_{u\ra 0^+}F(u) = G^{\delta_p}(x).
\] Therefore,
\[
f_r(x) = F(r) = G^{\delta_p}(x)+V^{-1}\int_0^r \frac{V_p(u)}{v_p(u)}du.
\] To see that this formula is also valid when $r = t$, we will show that $f_r$ is continuous. For each $\delta > 0$, consider the compact set
\[
E_\delta = \{(x,y)\in\M\times\M : G(x,y)\leq \delta^{-1}\}.
\] Since $G|_{E_\delta}$ is continuous, by  Tietze's Extension Theorem there is a continuous function $G_\delta:\M\times\M\ra\R$ with $G_\delta|_{E_\delta} = G|_{E_\delta}$, and $G_\delta\leq \delta^{-1}$. Then the function
\[
f_\delta(x) = \int_{y\in S(p,r)}G_\delta(x,y)d\sigma_r(y)
\] is continuous. For each $x\in\M$, we have that $|G_\delta(x,y)|\leq |G(x,y)|$ for every $y\in\M$. Let us show that $y\mapsto G(x,y)$ is $\sigma_r$--integrable. In what follows $C$ will represent an arbitrary positive constant. If $n > 2$, then $G(x,y)\leq Cd(x,y)^{2-n}$ (see, for example, \cite[Theorem 4.13]{Aubin}), so
\[
\int_{y\in S(p,r)}|G(x,y)|d\sigma_r(y) \leq C\int_{y\in S(p,r)}d(x,y)^{2-n}d\sigma_r(y).
\] Taking normal coordinates around $p$, we can write
\[
C\int_{y\in S(p,r)}d(x,y)^{2-n}d\sigma_r(y) = C\int_{v\in \S^{n-1}(r)}d(\exp_pu,\exp_pv)^{2-n}dv,
\] where $u = \exp_p^{-1}x$. Since $\exp_p^{-1}$ is smooth on the closed ball $\overline{B(p,r)}$, it is Lipschitz continuous, and hence there is a constant $L> 0$ such that $\|u-v\|\leq Ld(\exp_pu,\exp_pv)$. Therefore,
\[
C\int_{v\in \S^{n-1}(r)}d(\exp_pu,\exp_pv)^{2-n}\leq C\int_{v\in \S^{n-1}(r)}\|u-v\|^{2-n}dv < \infty.
\] The case $n = 2$ is similar using the fact that $G(x,y)\leq C\log\|x-y\|^{-1}$. Since $y\mapsto G(x,y)$ is $\sigma_r$--integrable, the Dominated Convergence Theorem applies and we get
\[
f_r(x) = \int_{y\in S(p,r)}\lim_{\delta\ra 0}G_\delta(x,y)d\sigma_r(y) = \lim_{\delta\ra 0}\int_{y\in S(p,r)}G_\delta(x,y)d\sigma_r(y).
\] Now $f_r$ is the pointwise limit of a non--decreasing sequence of continuous functions on a compact manifold. By Dini's Theorem, the convergence is uniform and thus $f$ is continuous.

Finally, if $x\in S(p,r)$, then we can take a sequence $(x_m)_m$ of points in $\M$ with $x_m\ra x$ and $d(p,x_m) > r$ for every $m$. Then
\begin{align*}
f(x) &= \lim_{m\ra\infty}f(x_m)\\
&= \lim_{m\ra\infty}G^\delta_p(x_m)+V^{-1}\int_0^r \frac{V_p(u)}{v_p(u)}du\\
&= G^{\delta_p}(x)+V^{-1}\int_0^r\frac{V_p(u)}{v_p(u)}du,
\end{align*} since $G^{\delta_p}$ is continuous at $x$.

To finish the proof, assume now that $r > t$. Then there exists $\delta>0$ such that $x\in B(p,u)$ for every $u\in (t,r+\delta)$. By Green's Second Identity,
\[
v_p(u)F'(u) = \int_{\M\setminus B(p,u)}\Delta G(x,y)\dvol(y) = V_p(u)V^{-1}-1, \qquad u\in (t,r+\delta).
\] Then
\[
F(r) = F(t)+\int_t^r \frac{V^{-1}V_p(u)-1}{v_p(u)}du,
\] and by the continuity of $f$ and its expression for $r = t$, we obtain that if $z\in S(p,r)$ is any point, then
\begin{align*}
f(x) &= F(r) = G^{\delta_p}(z)+V^{-1}\int_0^t\frac{V_p(u)}{v_p(u)}du+\int_t^r\frac{1-V^{-1}V_p(u)}{v_p(u)}du\\
&= G^{\delta_p}(z)+V^{-1}\int_0^r\frac{V_p(u)}{v_p(u)}du-\int_t^r\frac{du}{v_p(u)}.
\end{align*}
\end{proof}

We can now prove Theorem \ref{thm:harmonic_geodesic} stating that on locally harmonic Blaschke manifolds geodesic and harmonic balls coincide as families.

\begin{proof}[Proof of Theorem \ref{thm:harmonic_geodesic}] Let us denote by $h_r$ the potential of the measure $\chi_{B(p,r)}\vol$. This is,
\[
h_r(x) = \int_{y\in B(p,r)}G(x,y)\dvol(y),
\] and let
\begin{equation}\label{eq:def_br}
b_r(x) = h_r(x)-V^{-1}\int_0^rv(r)\int_0^t\frac{V(u)}{v(u)}dudt.
\end{equation} Applying Lemma \ref{lem:fr}, if $x\in \M\setminus B(p,r)$, then
\begin{align*}
h_r(x) &= \int_0^r v(r)\int_{S(p,t)}G(x,y)d\sigma_t(y)dt\\
&= \int_0^r v(r)\left(G^{\delta_p}(x)+V^{-1}\int_0^t\frac{V(u)}{v(u)}du\right)dt\\
&= V(r)G^{\delta_p}(x)+V^{-1}\int_0^r v(r)\int_0^t\frac{V(u)}{v(u)}dudt,
\end{align*} so that
\begin{equation}\label{eq:wrg}
b_r(x) = V(r)G^{\delta_p}(x) \quad \text{ on } \M\setminus B(p,r).
\end{equation} If $x\in B(p,r)$, then again by Lemma \ref{lem:fr} we get that
\begin{equation}\label{eq:wrg2}
b_r(x) = V(r)G^{\delta_p}(z)-\int_0^r v(t)\int_{d(p,x)}^t \frac{du}{v(u)}dt,
\end{equation} for some $z\in S(p,r)$. From the expression \eqref{eq:phiprima} for $\phi'$ in Proposition \ref{thm:green_phi}, we see that the function $x\mapsto V(r)G^{\delta_p}(x)$ is decreasing with the distance to $p$, and thus $V(r)G^{\delta_p}(z) < V(r)G^{\delta_p}(x)$ for every $z\in S(p,r)$ and every $x\in B(p,r)$. This fact together with \eqref{eq:wrg2} gives us
\begin{equation}\label{eq:wrg3}
b_r(x) < V(r)G^{\delta_p}(x) \quad \text{ on } B(p,r).
\end{equation} From the definition of $b_r$ \eqref{eq:def_br} it is clear that
\[
\Delta b_r = \Delta h_r = \chi_{B(p,r)}\vol-V^{-1}V(r)\vol,
\] and in particular,
\begin{equation}\label{eq:wrg4}
\Delta b_r \leq (1-V^{-1}V(r))\vol \quad \text{ on } \M.
\end{equation} Moreover, Lemma \ref{lem:fr} implies that $h_r$ is continuous, and so $b_r$ is also continuous. Now \eqref{eq:wrg}, \eqref{eq:wrg3} and \eqref{eq:wrg4} together with Lemma \ref{lem:lemita} imply that $\M\setminus B(p,r)\subseteq \M\setminus B^h(p,V(r))$. That is,
\begin{equation}\label{eq:contenido1}
B^h(p,V(r)) \subseteq B(p,r).
\end{equation}

Now let $u_a$ be the function from Definition \ref{def:harmonic_ball} with $a = V(r)$. We have the following:

\begin{enumerate}
\item $u_a \leq 1-V(r)V^{-1}$ everywhere on $\M$ and, in particular, in $B(p,r)$.
\item From the definition, $\Delta b_r = \Delta h_r = 1-V(r)V^{-1}$ on $B(p,r)$.
\item From \eqref{eq:wrg}, $b_r = V(r)G^{\delta_p}$ on $\M\setminus B(p,r)$.
\item From \eqref{eq:contenido1} and the definition of $B^h(p,V(r))$, $u_a = V(r)G^{\delta_p}$ on $\M\setminus B(p,r)$.
\end{enumerate} By the Maximum Principle, $u_a \leq b_r$ everywhere on $\M$. However, since $b_r\leq V(r)G^{\delta_p}$ and $\Delta b_r \leq 1-V(r)V^{-1}$ on $\M$, $b_r$ is a competing function for the obstacle problem \eqref{eq:obstacle_ua} for $u_a$. Hence we get $b_r = u_a$. Finally, the strict inequality in \eqref{eq:wrg3} together with \eqref{eq:wrg} imply that
\[
B^h(p,V(r)) = \{x\in\M : u_a < aG^{\delta_p}\} = \{x\in \M : b_r < aG^{\delta_p}\} = B(p,r).
\]
\end{proof}

As a consequence of Theorem \ref{thm:main1} and Theorem \ref{thm:harmonic_geodesic}, we obtain a separation distance result for locally harmonic Blaschke manifolds.

\begin{corollary}\label{cor:dsep_lh} Let $\M$ be a locally harmonic Blaschke manifold of dimension $n \geq 2$ and let $\omega_N^* = \{x_1^*, ..., x_N^*\}$ be a collection of $N\geq 2$ points minimizing the discrete Green energy. Then,
\[
x_i^*\notin B(x_j^*,r_N),
\] where $r_N$ is the radius such that $V(r_N) = V/(N-1)$. In particular, since $V(r) \leq Cr^n$ for some constant $C>0$, there exists a constant $c>0$ such that, for every sequence of minimizers $(\omega_N^*)_N$,
\[
\textup{dsep}(\omega_N^*) \geq cN^{-1/n}.
\]
\end{corollary}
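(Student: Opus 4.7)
The plan is to combine the two main theorems that have already been established. By Theorem \ref{thm:main1}, any two distinct points $x_i^*, x_j^*$ in a minimizing configuration for $E_G$ satisfy $x_i^* \notin B^h\bigl(x_j^*, V/(N-1)\bigr)$, with no assumption on $\M$ beyond compactness. By Theorem \ref{thm:harmonic_geodesic}, on a locally harmonic Blaschke manifold harmonic balls are exactly the geodesic balls of the same volume. So I would define $r_N > 0$ by the equation $V(r_N) = V/(N-1)$, which has a unique solution in $(0,D)$ for $N \geq 3$ since the map $r \mapsto V(r)$ is a continuous strictly increasing bijection from $[0,D]$ to $[0,V]$ on a locally harmonic Blaschke manifold (independent of the center). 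Invoking Theorem \ref{thm:harmonic_geodesic} with $a = V(r_N)$ then gives $B^h(x_j^*, V/(N-1)) = B(x_j^*, r_N)$, so the exclusion from Theorem \ref{thm:main1} transfers verbatim to the geodesic ball, yielding $d(x_i^*, x_j^*) \geq r_N$.

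To extract the asymptotic rate, I would use the standard fact that on any compact Riemannian manifold of dimension $n$ there is a constant $C = C(\M) > 0$ such that $V(r) \leq C r^n$ for every $r \in (0, D]$. This follows from the local expansion $V(r) = \omega_n r^n(1 + O(r^2))$ near $r=0$, together with continuity and compactness of $r \mapsto V(r)/r^n$ on $(0,D]$. Applying this bound to $r_N$ yields
\[
\frac{V}{N-1} = V(r_N) \leq C r_N^n,
\]
hence $r_N \geq \bigl(V/(C(N-1))\bigr)^{1/n} \geq c N^{-1/n}$ for a suitable $c > 0$, which combined with the first step gives $\dsep(\omega_N^*) \geq cN^{-1/n}$.

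The proof is essentially a one-paragraph deduction from the two main theorems, so there is no deep obstacle; the only point requiring a small amount of care is the edge case $N = 2$, in which $V/(N-1) = V$ and $r_2 = D$ lies at the boundary of the range in Theorem \ref{thm:harmonic_geodesic}. This case is easily absorbed into the statement either by noting that the asymptotic inequality $\dsep(\omega_N^*) \geq cN^{-1/n}$ holds trivially for $N = 2$ after possibly shrinking the constant $c$, or by observing directly that two distinct points trivially satisfy a positive lower bound on their distance. For the explicit constants quoted in Theorem \ref{thm:main2}, the remaining task (handled in the CROSS cases) is the explicit computation of $V(r)$ on each of $\S^n$, $\R\mathbb{P}^m$, $\C\mathbb{P}^m$, $\mathbb{H}\mathbb{P}^m$ and $\mathbb{O}\mathbb{P}^2$, which provides the sharp value of $C$ to use in the final estimate.
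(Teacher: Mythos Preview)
Your proposal is correct and follows exactly the approach the paper intends: the corollary is stated in the paper as an immediate consequence of Theorem~\ref{thm:main1} and Theorem~\ref{thm:harmonic_geodesic} with no separate proof, and you have spelled out precisely that deduction. Your handling of the $N=2$ edge case (where $a=V$ falls outside the range in Definition~\ref{def:harmonic_ball}) is also appropriate.
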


We can now write down explicit bounds for the separation distance in the case of the CROSS. The proof of Theorem \ref{thm:main2} consists of computing the radius $r_N$ in Corollary \ref{cor:dsep_lh}.

\begin{proof}[Proof of Theorem \ref{thm:main2}] The CROSS are known to be locally harmonic manifolds. For the sphere $\S^n$, we have that
\[
r^{n-1}\Omega(r) = \sin^{n-1}r.
\] By Lemma \ref{lem:515}, the volume of the ball $V(r) = V_{\mathbb{S}^n}(r)$ is
\begin{align*}
V(r) &= \int_0^r \int_{\theta\in S(p,t)}d\theta dt\\
&= \int_0^r \vol(\S^{n-1})\sin^{n-1}tdt
\end{align*} From the fact that $\sin t \leq t$, we obtain that
\[
V(r) \leq \frac{\vol(\S^{n-1})}{n}r^n.
\] Hence,
\[
V\left[\left(\frac{n\,\vol(\S^n)}{\vol(\S^{n-1})(N-1)}\right)^{1/n}\right]\leq \frac{\vol(\S^n)}{N-1},
\] Since $V(r)$ is an increasing function, the radius $r_N$ from Corollary \ref{cor:dsep_lh} satisfies
\begin{equation}\label{eq:rn_sn}
r_N \geq \left(\frac{n\,\vol(\S^n)}{\vol(\S^{n-1})}\right)^{1/n}(N-1)^{-1/n}.
\end{equation} Now, the volume of the sphere $\S^n$ is given by
\begin{equation}\label{eq:vol_sn}
\vol(\S^n) = \frac{2\pi^{\frac{n+1}{2}}}{\Gamma\left(\frac{n+1}{2}\right)} = \left\{\begin{array}{ll}
\frac{2^{\frac{n+3}{2}}\pi^{\frac{n+1}{2}}}{n!} & \text{ if }n\text{ is even,}\\
& \\
\frac{2\pi^{\frac{n+1}{2}}}{\left(\frac{n+1}{2}-1\right)!} & \text{ if }n\text{ is odd,}
\end{array}\right.
\end{equation} and thus the value of the constant in \eqref{eq:rn_sn} is
\[
C_{\S^n} = \left(\frac{n\,\vol(\S^n)}{\vol(\S^{n-1})}\right)^{1/n} = \left(\frac{n\,2\pi^{\frac{n+1}{2}}\Gamma\left(\frac{n}{2}\right)}{2\pi^{\frac{n}{2}}\Gamma\left(\frac{n+1}{2}\right)}\right)^{1/n} = \left(\frac{n\sqrt{\pi}\,\Gamma\left(\frac{n}{2}\right)}{\Gamma\left(\frac{n+1}{2}\right)}\right)^{1/n}.
\]

The projective spaces $\mathbb{K}\mathbb{P}^m$ (where $\mathbb{K}\in\{\R,\C,\mathbb{H},\mathbb{O}\}$, and we assume that $m = 2$ if $\mathbb{K} = \mathbb{O}$) have dimension $n = m\,\dim_\R\mathbb{K}$. Their volume densities were calculated in \cite[Proposition 3.3.1]{Kreyssig} under the assumption that they all have diameter $\pi$. Here we will adopt the convention that the diameter of the projective spaces is $\pi/2$. To compute the corresponding volume densities, we can look at the last statement in Lemma \ref{lem:515}, which tells us how does volume density change if we scale distances on the manifold. It is easy to see that the volumes $v(r)$ and $\hat{v}(r)$ of geodesic spheres in the diameter $\pi/2$ setting and in the diameter $\pi$ setting, respectively, are related by
\[
v(r) =2^{1-n}\hat{v}(2r).
\] Therefore, using the formulas in \cite[Proposition 3.3.1]{Kreyssig},
\begin{align*}
r^{n-1}\Omega(r) &= 2^{1-n}\,2^{\frac{\dim_\R\Kk}{2}(m-1)}(\sin 2r)^{\dim_\R\Kk-1}(1-\cos 2r)^{\frac{\dim_\R\Kk}{2}(m-1)}\\
&= 2^{1-n}\,2^{\frac{n}{2}-\frac{\dim_\R\Kk}{2}}\,2^{\dim_\R\Kk-1}\sin^{\dim_\R\Kk-1}r\,\cos^{\dim_\R\Kk-1}r\,2^{\frac{n}{2}-\frac{\dim_\R\Kk}{2}}\sin^{n-\dim_\R\Kk}r\\
&= \sin^{n-1}r\,\cos^{\dim_\R\Kk-1}r.
\end{align*} Now we can apply the same argument as in the case of the sphere. Since $\sin t\leq t$ and $\cos t\leq 1$, we get
\begin{equation}\label{rn_kn}
r_N\geq \left(\frac{n\,\vol(\Kk\mathbb{P}^m)}{\vol(\S^{n-1})}\right)^{1/n}(N-1)^{-1/n}.
\end{equation} Since projective spaces (except for the Cayley plane) are quotients of spheres, their volumes can be easily calculated (see, for example, \cite{VolumesManifolds}). The volumes are the following:

\begin{align}\label{eq:volumes_projecive}
\nonumber \vol(\R\mathbb{P}^m) &= \frac{\pi^{\frac{m+1}{2}}}{\Gamma\left(\frac{m+1}{2}\right)}, & \vol(\C\mathbb{P}^m) &= \frac{\pi^m}{m!},\\
\vol(\mathbb{H}\mathbb{P}^m) &= \frac{\pi^{2m}}{(2m+1)!}, &  \vol(\mathbb{O}\mathbb{P}^2) &=  \frac{3!\pi^8}{11!}.
\end{align} By using the formulas in \eqref{eq:vol_sn} and \eqref{eq:volumes_projecive}, we can start computing the values for the constant in the right hand side of \eqref{eq:rn_sn}. For the real projective space $(n = m)$, we get that
\[
C_{\R\mathbb{P}^m} = \left(\frac{n\,\vol(\mathbb{R}\mathbb{P}^m)}{\vol(\S^{n-1})}\right)^{1/n}
= \left(\frac{m\,\pi^\frac{m+1}{2}\Gamma\left(\frac{m}{2}\right)}{\Gamma\left(\frac{m+1}{2}\right)\,2\pi^{\frac{m}{2}}}\right)^{1/m} = \left(\frac{m\sqrt{\pi}\,\Gamma\left(\frac{m}{2}\right)}{2\,\Gamma\left(\frac{m+1}{2}\right)}\right)^{1/m}.
\] For the rest of the projective spaces, we use the fact  that $\S^{n-1}$ is always an odd dimensional sphere and look at \eqref{eq:vol_sn}. In the case of $\C\mathbb{P}^m$ ($n = 2m$),
\[
C_{\C\mathbb{P}^m} = \left(\frac{n\,\vol(\C\mathbb{P}^m)}{\vol(\S^{n-1})}\right)^{1/n} = \left(\frac{2m\,\pi^m(m-1)!}{m!\,2\pi^m}\right)^{1/2m} = 1.
\] In the case of $\mathbb{H}\mathbb{P}^m$ ($n = 4m$),
\[
C_{\mathbb{H}\mathbb{P}^m} = \left(\frac{n\,\vol(\mathbb{H}\mathbb{P}^m)}{\vol(\S^{n-1})}\right)^{1/n} = \left(\frac{4m\,\pi^{2m}(2m-1)!}{(2m+1)!\,2\pi^{2m}}\right)^{1/4m} = \left(\frac{1}{2m+1}\right)^{1/4m}.
\] Finally, for $\mathbb{O}\mathbb{P}^2$ ($n = 16$),
\[
C_{\mathbb{O}\mathbb{P}^2} = \left(\frac{16\,\vol(\mathbb{O}\mathbb{P}^2)}{\vol(\S^{15})}\right)^{1/16} = \left(\frac{16\cdot 3!\,\pi^8 7!}{11!\,2\pi^8}\right)^{1/16} = \left(\frac{1}{165}\right)^{1/16}.
\]
\end{proof}

\section{Separation distance in the general case}\label{sec:general}

In this last section we include the proof of Theorem \ref{thm:main3}, which essentially amounts to show that every (sufficiently small) harmonic ball contains a geodesic ball of suitable radius. In the Fermi lectures in 1998, Caffarelli stated a mean value theorem for general divergence form elliptic operators $L$. On page 9 of \cite{CaffarelliObstacleFermi} his argument requires a key test function which can be obtained by solving the problem
\begin{equation}\label{eq:Vr}
-Lw = \delta_p-\frac{1}{R^n}\chi_{\{w>0\}}
\end{equation} for $R>0$. It turns out that proving the existence of a solution to \eqref{eq:Vr} is not trivial, and in \cite{BlankDivergenceForm} Blank and Hao give a detailed proof of the statement by Caffarelli. Moreover, they show that the non--contact set $\mathcal{N}_R$ for the solution is always nested between two (Euclidean) balls, in the sense that there exist positive constants $c$ and $C$ such that
\[
\mathbb{B}_{cR}\subseteq \mathcal{N}_R \subseteq \mathbb{B}_{CR}
\] for every $R>0$. In \cite{BlankManifolds}, and more recently in \cite{BlankArmstrong}, the authors generalize some of the results in \cite{BlankDivergenceForm} to the setting of Riemannian manifolds. In this section we will simply adapt some of the arguments in \cite{BlankDivergenceForm} and \cite{BlankManifolds} to our case in order to show that every sufficiently small harmonic ball contains a geodesic ball of proportional radius. Some of the proofs are virtually the same and we will not include them.

We begin with a non--degeneracy theorem that describes how the graphs of $u_a$ and $aG^{\delta_p}$ separate one from each other near the boundary $\partial B^h(p,a)$. The proof relies on the estimate from \cite[Theorem 2.3]{BlankManifolds} which bounds the Laplacian of the squared Riemannian distance function in terms of bounds for the Ricci curvature.

\begin{theorem}[Local nondegeneracy]\label{thm:local_nd} There exist a radius $r_{ND}>0$ and a constant $C_{ND}$ such that for every $p\in\M$, $0 < a < V$, $x_0\in \overline{B^h(p,a)}$ and $0 < r \leq r_{ND}$,
\begin{equation}\label{eq:grafos}
\sup_{x\in B(x_0,r)}(aG^{\delta_p}(x)-u_a(x)) \geq C_{ND}r^2.
\end{equation}
\end{theorem}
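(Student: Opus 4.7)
The plan is to adapt Caffarelli's classical nondegeneracy argument for obstacle problems, with the Euclidean Laplacian replaced by the Riemannian one and the barrier given by a multiple of the squared geodesic distance. Define
$$
w(x) = aG^{\delta_p}(x) - u_a(x),
$$
which by Definition \ref{def:harmonic_ball} and Remark \ref{rmk:frostman} is non--negative on $\M$, vanishes on $\M\setminus B^h(p,a)$, is continuous on $\M\setminus\{p\}$, and blows up at $p$. The distributional identity I would rely on, coming from \eqref{eq:lap_ua} together with $\Delta(aG^{\delta_p}) = -aV^{-1}\vol$ away from $p$, is
$$
\Delta w = -1 \qquad \text{on } B^h(p,a)\setminus\{p\}.
$$
By continuity of $w$ on $\M\setminus\{p\}$ it is enough to establish \eqref{eq:grafos} for $x_0 \in B^h(p,a)$. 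Moreover the case $p\in B(x_0,r)$ is trivial since $w\to+\infty$ at $p$, so I may also assume $p\notin\overline{B(x_0,r)}$.

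The barrier I would use is the comparison function
$$
\psi(x) = w(x) - C_{ND}\,d(x_0,x)^2.
$$
The essential new ingredient in the Riemannian setting is \cite[Theorem 2.3]{BlankManifolds}, which together with compactness of $\M$ and the resulting two--sided Ricci bound yields constants $r_0,K>0$ such that $\Delta d(x_0,\cdot)^2 \geq -K$ on $B(x_0,r_0)$, uniformly in the base point $x_0$. Setting $C_{ND} = 1/K$ and $r_{ND} = \min\{r_0,\inj(\M)\}$ immediately forces
$$
\Delta\psi = -1 - C_{ND}\,\Delta d(x_0,\cdot)^2 \leq -1 + C_{ND}\,K = 0
$$
on the open set $D = B(x_0,r) \cap B^h(p,a)$, which by the reduction above is contained in $B^h(p,a)\setminus\{p\}$. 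In other words, $\psi$ is continuous on $\overline{D}$ and subharmonic (in the sense $\Delta\psi \leq 0$) on $D$.

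The conclusion comes from the maximum principle. At $x_0$ one has $\psi(x_0) = w(x_0) \geq 0$, while on the free--boundary portion $B(x_0,r)\cap\partial B^h(p,a)$ of $\partial D$ one has $w\equiv 0$, so $\psi = -C_{ND}\,d(x_0,\cdot)^2 \leq 0$ there. The non--negative supremum of $\psi$ on $\overline{D}$ must therefore be attained on the outer piece $\partial B(x_0,r) \cap \overline{B^h(p,a)}$, producing a point $y$ at which $w(y) - C_{ND}r^2 \geq 0$, which is \eqref{eq:grafos}.

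The only real obstacle is extracting the Laplacian comparison bound in a form that is uniform in $x_0$ and has an explicit radius of validity $r_0$; this is exactly the content of \cite[Theorem 2.3]{BlankManifolds} and relies on the lower Ricci bound afforded by compactness. The two remaining subtleties --- the singularity of $G^{\delta_p}$ at $p$ and the possibility that $x_0$ itself lies on $\partial B^h(p,a)$ --- are handled, respectively, by the reduction $p\notin\overline{B(x_0,r)}$ and by noting that in the boundary case $\psi(x_0)=0$ still suffices to run the maximum principle argument.
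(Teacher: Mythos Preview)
Your proposal is correct and is precisely the argument the paper has in mind: the paper omits the proof entirely and simply says it ``amounts to mimic that of \cite[Theorem 2.3]{BlankManifolds},'' citing the Laplacian bound on $d(x_0,\cdot)^2$ as the key input. You have faithfully reconstructed that omitted proof---Caffarelli's nondegeneracy barrier $\psi = w - C_{ND}\,d(x_0,\cdot)^2$, with the Riemannian Laplacian comparison (from the Ricci lower bound available by compactness) replacing the Euclidean identity $\Delta|x|^2 = -2n$, followed by the maximum principle on $D = B(x_0,r)\cap B^h(p,a)$---and your handling of the two technical reductions (to $x_0\in B^h(p,a)$ and to $p\notin\overline{B(x_0,r)}$) is sound.
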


We do not include the proof of this theorem, which simply amounts to mimic that of \cite[Theorem 2.3]{BlankManifolds}. Next, we prove a lemma that will be necessary to adapt the argument in the proof of \cite[Lemma 4.1]{BlankManifolds}.

\begin{lemma}\label{lem:cota_inf_ua} The function $u_a$ is bounded from below by $\min_{x\in\M}aG^{\delta_p}(x)$.
\end{lemma}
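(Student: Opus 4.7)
The plan is to apply directly the obstacle-problem characterization of $u_a$ recorded in Remark \ref{rmk:obstacle_ua}: $u_a$ is the pointwise largest function satisfying $u \leq aG^{\delta_p}$ and $\Delta u \leq \t_a$ in the sense of distributions. The strategy is then to exhibit the constant function $m := \min_{x\in\M} aG^{\delta_p}(x)$ as an admissible competitor in this obstacle problem, from which the bound $u_a \geq m$ follows immediately.

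First I would argue that $m$ is a finite real number. The Green function $G^{\delta_p}$ is continuous on $\M \setminus \{p\}$ with $\lim_{x\to p} aG^{\delta_p}(x) = +\infty$, so $aG^{\delta_p}$ is lower semicontinuous on the compact manifold $\M$ and therefore attains its infimum at some point of $\M\setminus\{p\}$. In particular $m > -\infty$.

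Next I would verify the two conditions \eqref{eq:obstacle_ua} for the constant function $u \equiv m$. The inequality $m \leq aG^{\delta_p}$ holds everywhere on $\M$ by the very definition of $m$ as the minimum. For the Laplacian condition, $\Delta m = 0$ (a constant is harmonic), and $\t_a = 1 - aV^{-1} > 0$ by Remark \ref{rmk:harmonic_ball} (since $0 < a < V$), so $\Delta m = 0 \leq \t_a$. Hence $m$ is an admissible competitor.

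Finally I would invoke Remark \ref{rmk:obstacle_ua}: since $u_a$ is the largest function satisfying \eqref{eq:obstacle_ua}, we conclude that $m \leq u_a$ pointwise on $\M$, which is exactly the claim. No step here looks like a serious obstacle; the only mild subtlety is ensuring that the minimum of $aG^{\delta_p}$ is actually attained and finite, which is handled by the compactness of $\M$ and the lower semicontinuity of $aG^{\delta_p}$.
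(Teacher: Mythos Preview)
Your proof is correct and takes a genuinely different, more direct route than the paper's. The paper argues via the Maximum Principle: it sets $m_a = \min_{\M\setminus B^h(p,a)} aG^{\delta_p}$, observes that $u_a = aG^{\delta_p} \geq m_a$ on the contact set $\M\setminus B^h(p,a)$ and hence on $\partial B^h(p,a)$, uses that $u_a$ is superharmonic inside $B^h(p,a)$ (from \eqref{eq:lap_ua}) to propagate the bound into the harmonic ball, and then shows \emph{a posteriori} that $m_a$ coincides with the global minimum of $aG^{\delta_p}$. Your approach bypasses all of this by exhibiting the constant $m$ directly as a competitor in the obstacle formulation \eqref{eq:obstacle_ua}, using only that $\t_a > 0$ and the definition of $m$; it is shorter and conceptually cleaner. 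The paper's route has the mild advantage of not invoking the obstacle characterization from Remark~\ref{rmk:obstacle_ua} (it uses only the PDE \eqref{eq:lap_ua} and the Maximum Principle), but since Remark~\ref{rmk:obstacle_ua} is already part of the paper's toolkit and is used elsewhere, your argument fits the framework just as well.
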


\begin{proof} Let $m_a = \min_{x\in\M\setminus B^h(p,a)}aG^{\delta_p}(x)$. Then $u_a \geq m_a$ on $\M\setminus B^h(p,a)$, because $u_a$ coincides with $aG^{\delta_p}$ in this set. In particular, $u_a \geq m_a$ on $\partial B^h(p,a)$, and since $u_a$ is superharmonic in $B^h(p,a)$, it follows from the Maximum Principle that $u_a \geq m_a$ also on $B^h(p,a)$. Hence $u_a \geq m_a$ everywhere on $\M$.

Now, since $aG^{\delta_p}\geq u_a$ on $B^h(p,a)$, we have that
\[
\inf_{x\in B^h(p,a)}aG^{\delta_p}(x) \geq \inf_{x\in B^h(p,a)}u_a(x) \geq m_a = \min_{x\in \M\setminus B^h(p,a)}aG^{\delta_p}(x).
\] Therefore $m_a = \min_{x\in\M}aG^{\delta_p}(x)$ and the lemma follows.
\end{proof}

%
%\begin{theorem}[Global nondegeneracy]\label{thm:global_nd} There exists a constant $C_{GN}>0$ such that for every $p\in\M$, $0 < a < V$, $x_0\in \overline{B^h(p,a)}$ and $r \geq r_{ND}$,
%\[
%\sup_{x\in B(x_0,r)}(aG^{\delta_p}(x)-u_a(x)) \geq C_{GN}r.
%\]
%\end{theorem}

Throughout the rest of this section we will fix a radius $0 < R < \inj(\M)$. The next proposition shows that for every small enough $a$, the harmonic ball $B^h(p,a)$ is positive distance away from $\partial B(p,R)$.

\begin{proposition}\label{thm:positive_distance} There exists a constant $a_0$ with $0 < a_0 < V$ such that, for every $p\in\M$ and for every $0 < a \leq a_0$, $\overline{B^h(p,a)}\subset B(p,R)$.
\end{proposition}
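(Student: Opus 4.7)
The plan is to argue by contradiction, combining local nondegeneracy (Theorem~\ref{thm:local_nd}) with the pointwise lower bound for $u_a$ from Lemma~\ref{lem:cota_inf_ua} and the connectedness of harmonic balls. Suppose, aiming at a contradiction, that for arbitrarily small $a$ there exist $p\in\M$ with $\overline{B^h(p,a)} \not\subset B(p,R)$. Since $B^h(p,a)$ is connected by Proposition~\ref{prop:bharm_connected} and contains $p$ (Remark~\ref{rmk:harmonic_ball}), so does $\overline{B^h(p,a)}$; as it is assumed to reach outside $B(p,R)$, it must meet the geodesic sphere $S(p,R)$ at some point $x_0$.

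Next I would fix $r := \min\{r_{ND},\, R/2\}$, a constant depending only on $\M$ and $R$, and apply Theorem~\ref{thm:local_nd} at $x_0$ to produce $x^* \in B(x_0,r)$ with
$$aG^{\delta_p}(x^*) - u_a(x^*) \geq C_{ND}\,r^2.$$
Because $d(x^*,p) \geq R - r \geq R/2$, and $G$ is continuous on the compact set $\{(x,y)\in\M\times\M : d(x,y)\geq R/2\}$, the value $G^{\delta_p}(x^*)$ is bounded above by a constant $M_+$ depending only on $\M$ and $R$. Similarly, since $G$ blows up on the diagonal but is continuous off of it, $M_- := \min_{\M\times\M} G$ is a finite global constant, and Lemma~\ref{lem:cota_inf_ua} then gives $u_a(x^*) \geq aM_-$. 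Combining these estimates yields
$$C_{ND}\,r^2 \leq a\,(M_+ - M_-),$$
so $a$ is forced to be at least $C_{ND}\,r^2/(M_+-M_-)$.

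Taking $a_0 := \min\{C_{ND}\,r^2/(M_+-M_-),\, V/2\}$ then contradicts the hypothesis $a \leq a_0$ and proves the proposition. The main non-routine point I foresee is the uniformity in $p$ of all the constants appearing in the chain of inequalities; for $r_{ND}$ and $C_{ND}$ this is built into the statement of Theorem~\ref{thm:local_nd}, while for $M_{\pm}$ it follows from the compactness of $\M$ together with the standard off-diagonal behaviour of $G$ recalled above.
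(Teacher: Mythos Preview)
Your proposal is correct and follows essentially the same approach as the paper: both argue by contradiction using the nondegeneracy estimate (Theorem~\ref{thm:local_nd}) together with the lower bound on $u_a$ from Lemma~\ref{lem:cota_inf_ua}, and then invoke connectedness (Proposition~\ref{prop:bharm_connected}). The only cosmetic difference is that the paper applies nondegeneracy at a point $y_0$ lying at distance $\delta<\min\{r_{ND},R/2\}$ \emph{inside} $B(p,R)$, whereas you apply it at a point $x_0\in S(p,R)\cap\overline{B^h(p,a)}$; either choice works, and your global bound $M_+$ on $G$ over $\{d\geq R/2\}$ plays the role of the paper's bound $C$ on the tubular neighbourhood (you may want to take $a_0$ \emph{strictly} smaller than $C_{ND}r^2/(M_+-M_-)$ to get a genuine contradiction rather than a possible equality).
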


\begin{proof} Let $\delta$ be a number with $0 < \delta < \min\{r_{ND},R/2\}$, where $r_{ND}$ is the radius from Theorem \ref{thm:local_nd}. Let us denote by $\eta_{2\delta}(\partial B(p,R))$ the $2\delta$--tubular neighborhood of $\partial B(p,R)$. Since $p\notin \eta_{2\delta}(\partial B(p,R))$, there is a constant $C$ such that $G^{\delta_p}\leq C$ on $\overline{\eta_{2\delta}(\partial B(p,R))}\cap\overline{B(p,R)}$. Let $y_0\in B(p,R)$ be a point such that $d(y_0,\partial B(p,R)) = \delta$, and assume by contradiction that $y_0\in \overline{B^h(p,a)}$ for every $0 < a < V$. Then, by \eqref{eq:grafos} in Theorem \ref{thm:local_nd},
\[
aC-\inf_{y\in B(y_0,\delta)}u_a(y) \geq \sup_{y\in B(y_0,\delta)}(aG^{\delta_p}(y)-u_a(y)) \geq C_{ND}\delta^2,
\] which implies
\begin{equation}\label{eq:contr1}
\inf_{y\in B(y_0,\delta)}u_a(y) \leq aC-C_{ND}\delta^2.
\end{equation} Let $m = \min_{x\in\M}G^{\delta_p}$. Then $am = \min_{x\in\M}aG^{\delta_p}$ and, by Lemma \ref{lem:cota_inf_ua}, $am \leq u_a$ everywhere on $\M$. Hence \eqref{eq:contr1} implies
\[
am \leq aC - C_{ND}\delta^2,
\] which gives a contradiction as soon as $a$ is sufficiently small.

We have shown that, as soon as $a$ is bounded above by some $a_0$, $B^h(p,a)\cap B(p,R)$ is either empty or positive distance away from $\partial B(p,R)$. The fact that $B^h(p,a)$ is connected (Proposition \ref{prop:bharm_connected}) and $p\in B^h(p,a)\cap B(p,R)$ finishes the proof.
\end{proof}

With Proposition \ref{thm:positive_distance} in hand we can now rigorously prove that the mean value sets $D_p(r)$ from \cite{BlankManifolds} and harmonic balls $B^h(p,a)$ from \cite{GustafssonPartialBalayage} are the same subsets of $\M$ as long as $r$ and $a$ are sufficiently small. The mean value sets are defined in terms of $G_R^{\delta_p}$, the Green's function of the geodesic ball $B(p,R)$. This function satisfies:
\begin{enumerate}
\item $G_R^{\delta_p}(x) > 0$ on $B(p,R)\setminus\{p\}$,
\item $G_R^{\delta_p}(x)$ is smooth on $B(p,R)\setminus\{p\}$,
\item $G_R^{\delta_p}(x) = 0$ for every $x\in \partial B(p,R)$,
\item $\lim_{x\ra p}G_R^{\delta_p}(x) = +\infty$,
\item $\Delta G_R^{\delta_p} = \delta_p$ in the sense of distributions.
\end{enumerate} (See \cite[Ch. 4]{Aubin}). If $\Omega\subset\M$ is an open set with smooth boundary, we denote by $W^{1,2}_0(\Omega)$ the completion of $C^\infty_0(\Omega)$ with respect to the Sobolev norm in $W^{1,2}(\Omega)$. The definition of the mean value sets is as follows:

\begin{definition}\label{def:mean_value_sets} Let $p\in\M$ be any point and let $r>0$. There is a unique function $w_r$ which minimizes the functional
\[
\hat{J}_{r,R}(w) = \int_{B(p,R)}\|\nabla w\|^2-2r^{-n}w
\] among all functions in $W^{1,2}_0(B(p,R))$ with $w \leq G_R^{\delta_p}$. The function $w_r$ is the solution to the obstacle problem
\[
\left\{\begin{array}{ll}
\Delta w = \frac{1}{r^{n}}\chi_{\{w < G_R^{\delta_p}\}} & \text{ in }B(p,R),\\
w = 0 & \text{ on }\partial B(p,R).
\end{array}\right.
\] $w_r$ is continuous and we define the mean value set $D_p(r)$ by
\[
D_p(r) = \{w_r < G_R^{\delta_p}\}.
\]
\end{definition}

\begin{proposition}\label{thm:bh_mvs} There is an $r_0>0$ such that, for every $p\in\M$ and for every $0 < r \leq r_0$, the sets $D_p(r)$ and $B^h(p,r^n)$ are compactly contained in $B(p,R)$, and, moreover,
\[
D_p(r) = B^h(p,r^n).
\]
\end{proposition}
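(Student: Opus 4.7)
The plan is to exchange the global Green's function $G^{\delta_p}$ for its Dirichlet counterpart $G_R^{\delta_p}$ via an explicit construction, reducing the identity $B^h(p,r^n)=D_p(r)$ to the maximality characterizations of the two obstacle problems.

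The first step is to establish both compact containments. For $B^h(p,r^n)$ this is immediate from Proposition~\ref{thm:positive_distance}: taking $r_1=a_0^{1/n}$ gives $\overline{B^h(p,r^n)}\subset B(p,R)$ for every $r\leq r_1$. For $D_p(r)$ I would prove the analogous containment by mimicking the proof of Proposition~\ref{thm:positive_distance} almost verbatim, using a local nondegeneracy estimate for $w_r$ of the type established in \cite{BlankDivergenceForm,BlankManifolds} together with a lower bound on $w_r$ analogous to Lemma~\ref{lem:cota_inf_ua}. This yields an $r_2>0$ such that $\overline{D_p(r)}\subset B(p,R)$ for every $r\leq r_2$. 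I then set $r_0=\min(r_1,r_2)$ and $a=r^n$.

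The crucial observation is that $G^{\delta_p}-G_R^{\delta_p}$ is smooth on $B(p,R)$ (the Dirac masses cancel) and satisfies $\Delta(G^{\delta_p}-G_R^{\delta_p})=-V^{-1}$. Using this I would define
\[
\tilde u(x)=\begin{cases} aw_r(x)+a\bigl(G^{\delta_p}(x)-G_R^{\delta_p}(x)\bigr), & x\in B(p,R),\\ aG^{\delta_p}(x), & x\in\M\setminus B(p,R). \end{cases}
\]
Because $\overline{D_p(r)}\subset B(p,R)$, one has $w_r=G_R^{\delta_p}$ in a neighbourhood of $\partial B(p,R)$, so $\tilde u=aG^{\delta_p}$ there from both sides; hence $\tilde u\in W^{1,2}(\M)$ with no surface contribution to $\Delta\tilde u$ along $\partial B(p,R)$. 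A direct distributional computation using $\Delta w_r=r^{-n}\chi_{D_p(r)}$ then yields $\Delta\tilde u=\chi_{D_p(r)}-aV^{-1}\leq\t_a$, while $\tilde u\leq aG^{\delta_p}$ follows from $w_r\leq G_R^{\delta_p}$. Remark~\ref{rmk:obstacle_ua} forces $\tilde u\leq u_a$.

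The symmetric construction yields the reverse inequality: define $\hat w(x)=(u_a(x)-aG^{\delta_p}(x)+aG_R^{\delta_p}(x))/a$ on $B(p,R)$. Since $\overline{B^h(p,a)}\subset B(p,R)$, $u_a=aG^{\delta_p}$ near $\partial B(p,R)$, so $\hat w\in W^{1,2}_0(B(p,R))$; one checks $\hat w\leq G_R^{\delta_p}$ and $\Delta\hat w\leq r^{-n}$, so the standard analog of Remark~\ref{rmk:obstacle_ua} for the Dirichlet obstacle problem (provable by a routine $\max(w_r,\hat w)$ comparison inside the variational inequality for $w_r$) forces $\hat w\leq w_r$, i.e.\ $u_a\leq\tilde u$ on $B(p,R)$. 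Combined with $u_a=aG^{\delta_p}=\tilde u$ outside, this gives $u_a=\tilde u$ on $\M$, and therefore $B^h(p,r^n)=\{u_a<aG^{\delta_p}\}=\{w_r<G_R^{\delta_p}\}=D_p(r)$. The main obstacle is the preliminary compact containment $\overline{D_p(r)}\subset B(p,R)$: once it is in place the rest of the argument is essentially algebraic, but setting up the Blank--Hao style nondegeneracy for $w_r$ in the Riemannian setting is what requires genuine work.
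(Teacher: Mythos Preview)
Your proposal is correct and rests on the same key observation as the paper: the function $s_R:=G^{\delta_p}-G_R^{\delta_p}$ is smooth on $B(p,R)$ with $\Delta s_R=-V^{-1}$, so the affine change of variables $u\mapsto u/a-s_R$ carries the global obstacle problem for $u_a$ to the Dirichlet obstacle problem for $w_r$. The paper implements this at the level of the variational functionals---it writes down the bijection $\vphi(u)=a^{-1}u-s_R$ between the admissible sets, computes $J_{a,R}\circ\vphi^{-1}$ explicitly, and concludes $u_a=a(w_r+s_R)$ by uniqueness of minimizers---whereas you implement the same change of variables at the level of the maximality characterizations, producing the two-sided comparison $\tilde u\leq u_a$ and $\hat w\leq w_r$. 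The paper's route is slightly more self-contained (it uses only uniqueness of the minimizer, which is already in hand), while yours needs the supersolution/maximality characterization for the Dirichlet problem, which you correctly flag as an extra but routine ingredient; conversely, your argument avoids the somewhat tedious expansion of $J_{a,R}\circ\vphi^{-1}$. For the preliminary containment $\overline{D_p(r)}\subset B(p,R)$, the paper simply cites \cite[Lemma~4.1]{BlankManifolds} rather than reproving it, which is exactly the Blank--Hao nondegeneracy input you identify as the main piece of genuine work.
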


\begin{proof} In the proof of Lemma 4.1 in \cite{BlankManifolds} it is shown that there exists a radius $\tilde{r}_0 > 0$ such that, for every $p\in\M$ and for every $0 < r \leq \tilde{r}_0$, $\overline{D_p(r)}\subset B(p,R)$. Let $a_0$ be the constant from Theorem \ref{thm:positive_distance}, and let $r_0 = \min\{a_0^{1/n},\tilde{r}_0\}$.

Let $0 < r \leq r_0$ be any radius, and let us set $a = r^n$. From Theorem 7 in \cite{GustafssonPartialBalayage}, the function $u_a$ is the unique minimizer of the functional
\[
J_a(u) = \int_\M \|\nabla u\|^2-2\t_a u
\] among all the functions $u\in W^{1,2}(\M)$ with $u\leq aG^{\delta_p}$. By Proposition \ref{thm:positive_distance}, if we let $\tilde{u}_a$ be the unique minimizer of the functional
\[
J_{a,R}(u) = \int_{B(p,R)}\|\nabla u\|^2-2\t_a u
\] in the set
\[
K_{a,R} = \{u\in W^{1,2}(B(p,R)) : u-aG^{\delta_p}\in W^{1,2}_0(B(p,R)),\,u\leq aG^{\delta_p}\},
\] then $u_a$ is equal to $\tilde{u}_a$ extended to all $\M$ by $\tilde{u}_a = aG^{\delta_p}$ on $\M\setminus B(p,R)$. Consider now the set $\hat{K}_{a,R}$ given by
\[
\hat{K}_{a,R} = \{w\in W^{1,2}_0(B(p,R)) : w \leq G_R^{\delta_p}\}.
\] Let $s_R$ be the solution to the problem
\[
\left\{\begin{array}{ll}
\Delta s = -V^{-1} & \text{ in }B(p,R),\\
s = G^{\delta_p} & \text{ on }\partial B(p,R).
\end{array}\right.
\] It is easy to check that $\vphi(u) = \frac{1}{a}u-s_R$ is a bijective map between $K_{a,R}$ and $\hat{K}_{a,R}$ with inverse $\vphi^{-1}(w) = a(w+s_R)$ (just note that $G_R^{\delta_p} = G^{\delta_p}-s_R$ in $\overline{B(p,R)}$). Hence, the function $\vphi(u)$ is the unique minimizer to the functional $J_{a,R}\circ \vphi^{-1}$. Let us compute the expression for this functional. If $w\in \hat{K}_{a,R}$, then
\begin{align*}
J_{a,R}\circ\vphi^{-1}(w) &= \int_{B(p,R)}\|\nabla a(w+s_R)\|^2-2(1-aV^{-1})a(w+s_R)\\
&= \int_{B(p,R)}a^2\|\nabla w\|^2+a^2\|\nabla s_R\|^2+2a\langle \nabla w,\nabla s_R\rangle\\
&\quad -2aw-2as_R+2a^2V^{-1}w+2a^2V^{-1}s_R
\end{align*} Since $w\in W^{1,2}_0(B(p,R))$, by Green's First Identity we get that
\[
\int_{B(p,R)}\langle \nabla w,\nabla s_R\rangle = \int_{B(p,R)}w\Delta s_R = \int_{B(p,R)}-V^{-1}w.
\] Therefore, we can write
\begin{align*}
J_{a,R}\circ\vphi^{-1}(w) &= \frac{1}{a^2}\left(\int_{B(p,R)}\|\nabla w\|^2-2a^{-1}w\right)+\int_{B(p,R)}a^2\|\nabla s_R\|^2-2as_R+2aV^{-1}s_R.
\end{align*} It follows that $w$ minimizes $J_{a,R}\circ \vphi^{-1}$ if and only if it minimizes the functional $\hat{J}_{R,r}$ from Definition \ref{def:mean_value_sets} for $r = a^{1/n}$. By uniqueness of the minimizers, $\tilde{u}_a = w_{a^{1/n}}$ (or $\tilde{u}_{r^n} = w_r$) and
\[
B^h(p,a) = \{\tilde{u}_a < G^{\delta_p}\} = \{w_r < G_R^{\delta_p}\} = D_p(r).
\]
\end{proof}

Proposition \ref{thm:positive_distance} allows us to work in coordinates and use results for elliptic operators defined on subsets of $\R^n$. We consider operators of the form
\[
L = D_i a^{ij}D_j,
\] with smooth coefficients $a^{ij}$ which are uniformly elliptic in the sense that there exist ellipticity constants $\Lambda\geq \lambda > 0$ such that
\[
\lambda\|\xi\|^2 \leq a^{ij}\xi_i\xi_j \leq \Lambda \|\xi\|^2
\] for every vector $\xi = (\xi_1, ..., \xi_n)\in\R^n$. In this setting, the Riemannian Laplacian has the coordinate expression
\[
\Delta = -\frac{1}{\sqrt{|g|}}L,
\] where $a^{ij} = \sqrt{|g|}g^{ij}$.

\begin{remark}\label{rmk:ellipticity_uniform} Since $\M$ is compact and $R < \inj(\M)$, we can look at the properties of the volume density in Proposition \ref{prop:prpiedades_volume_density} to see that $\omega_x(y)$ is bounded above by $1$ and below by a positive constant in the set
\[
\{(x,y)\in \M\times\M : d(x,y)\leq R\}.
\] The coordinate expression for $\omega_x(y)$ in normal coordinates is precisely $\sqrt{|g|}$ (see Definition \ref{def:volume_density}), and hence this function will share the same bounds for every $p\in\M$. Moreover, since the eigenvalues of $g^{-1}$ can be bounded in terms of $\sqrt{|g|}$, we can always take the same ellipticity constants $\Lambda \geq \lambda > 0$, not depending on $p$, for the expression of the Laplacian in normal coordinates on $B(p,R)$.
\end{remark}

In what follows we will use the same notation for functions in $\M$ and for their coordinate expressions in $\R^n$ obtained by composing with the exponential map $\exp_p$. The following result is analogous to \cite[Lemma 4.4]{BlankDivergenceForm}.

\begin{lemma}\label{lem:cota_k} Let $L$ be a differential operator defined on the Euclidean ball $\mathbb{B}_R = \{x\in\R^n : \|x\| < R\}$ of the form
\[
L = D_i a^{ij}D_j,
\] where the $a^{ij}$ are smooth and uniformly elliptic with ellipticity constants $\lambda$ and $\Lambda$. Let $b$ be a function with $b \leq 1$. Assume that there is a continuous function $w$ that satisfies
\[
\left\{\begin{array}{ll}
Lw = -\frac{b}{r^n} & \text{ in }\mathbb{B}_R,\\
w = 0 & \text{ on }\partial \mathbb{B}_R,
\end{array}
\right.
\] where $r > 0$. Then there exists a constant $k = k(r,\lambda)>0$ such that $w \leq k$.
\end{lemma}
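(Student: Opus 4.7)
My plan is to prove the bound by comparison with an explicit supersolution. First I would observe that, since $b \leq 1$, the solution $w$ is pointwise majorized by the solution of the worst-case problem with $b \equiv 1$. Namely, let $\Psi\in W^{1,2}_0(\mathbb{B}_R)$ solve $L\Psi = -1/r^n$ with $\Psi = 0$ on $\partial\mathbb{B}_R$. Then $L(w - \Psi) = (1-b)/r^n \geq 0$ weakly in $\mathbb{B}_R$ and $w - \Psi = 0$ on $\partial\mathbb{B}_R$, so the weak maximum principle for the divergence-form uniformly elliptic operator $L$ (as in Gilbarg--Trudinger Ch.~8) gives $w \leq \Psi$ throughout $\mathbb{B}_R$. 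It therefore suffices to produce a pointwise upper bound on $\Psi$ of the required form.

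To bound $\Psi$ from above I would construct an explicit barrier of the familiar exponential type. Set
\[
\phi(x) = A\bigl(e^{\alpha R} - e^{\alpha x_1}\bigr),
\]
with $A, \alpha > 0$ to be chosen. A direct computation yields
\[
L\phi(x) = -A\,e^{\alpha x_1}\Bigl[\alpha^2 a^{11}(x) + \alpha \sum_{i=1}^n D_i a^{i1}(x)\Bigr].
\]
By uniform ellipticity $a^{11}\geq\lambda$, and the first derivatives $D_i a^{i1}$ are uniformly bounded on $\overline{\mathbb{B}_R}$ since $a^{ij}$ are smooth. Hence, once $\alpha$ is chosen large enough (in terms of $\lambda$ and $\max|D_i a^{i1}|$), the bracket is at least $\alpha^2\lambda/2$, so $L\phi \leq -\tfrac{A\alpha^2\lambda}{2}e^{-\alpha R}$. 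Setting $A = \dfrac{2e^{\alpha R}}{\alpha^2\lambda\, r^n}$ gives $L\phi \leq -1/r^n$ throughout $\mathbb{B}_R$.

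Finally, since $|x_1|\leq R$ on $\overline{\mathbb{B}_R}$, one has $\phi\geq 0$ on $\partial\mathbb{B}_R$, so $L(\Psi - \phi) = -1/r^n - L\phi \geq 0$ and $\Psi - \phi \leq 0$ on $\partial\mathbb{B}_R$; the weak maximum principle then yields $\Psi \leq \phi \leq A e^{\alpha R}$ in $\mathbb{B}_R$. Combining with Step 1,
\[
w \leq \Psi \leq \frac{2\,e^{2\alpha R}}{\alpha^2\,\lambda\, r^n} =: k,
\]
which is the desired constant.

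The only subtle point is the claimed dependence $k = k(r,\lambda)$: the argument actually produces a constant depending also on $R$, on the dimension $n$, and on $C^1$ bounds of the $a^{ij}$ (through the choice of $\alpha$). In the present setting this is immaterial because $R < \mathrm{inj}(\M)$ is fixed and, as noted in Remark \ref{rmk:ellipticity_uniform}, the coefficients $a^{ij} = \sqrt{|g|}g^{ij}$ are obtained from a fixed smooth metric on a compact manifold; these auxiliary quantities are uniform in $p\in\M$ and can be absorbed into a single constant. Thus the main obstacle is essentially bookkeeping rather than technique: verifying that everything depending on $L$, $R$, $\M$ is uniformly controlled so that $k$ depends only on the variable parameter $r$ (and on the ellipticity constant $\lambda$, which enters linearly in the denominator).
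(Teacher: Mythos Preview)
Your argument is correct, but it takes a different route from the paper's. Both proofs begin identically, reducing to a bound on the solution $\Psi$ (the paper calls it $\tilde{w}$) of $L\Psi = -1/r^n$ with zero boundary data. The paper then invokes \cite[Corollary 7.1]{LSW}, which compares $\tilde{w}$ with the explicit solution $\ell(x) = \frac{R^2-\|x\|^2}{2n\lambda r^n}$ of the constant-coefficient problem $L_\lambda \ell = -1/r^n$; this yields $\tilde{w}\leq K\ell \leq \frac{KR^2}{2n\lambda r^n}$ with $K$ depending only on the ellipticity constant $\lambda$. Your exponential barrier is more elementary and self-contained, avoiding the Littman--Stampacchia--Weinberger machinery, but the price is exactly what you flagged: the choice of $\alpha$ depends on $\sup|D_i a^{i1}|$, so your $k$ depends on $C^1$ bounds for the coefficients, not just on $\lambda$. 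The LSW route genuinely delivers the stated dependence $k=k(r,\lambda)$ (with $R,n$ fixed), which matters if one cares about operators with merely bounded measurable coefficients. In the present application, as you correctly observe, the coefficients come from a smooth metric on a compact manifold and Remark~\ref{rmk:ellipticity_uniform} makes all such bounds uniform in $p$, so either approach suffices for Theorem~\ref{thm:bolita}.
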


\begin{proof} Let $\tilde{w}$ be the solution to the problem
\[
\left\{\begin{array}{ll}
Lw = -\frac{1}{r^n} & \text{ in }\mathbb{B}_R,\\
w = 0 & \text{ on }\partial \mathbb{B}_R.
\end{array}
\right.
\] Since $b\leq 1$, we have that $-\frac{b}{r^n} \geq -\frac{1}{r^n}$. By the Maximum Principle, $w\leq \tilde{w}$. Now consider the scalar multiple of the Euclidean Laplacian given by
\[
L_\lambda = \sum_{i=1}^nD_i\lambda D_i.
\] The function
\[
\ell(x) = \frac{1}{\lambda r^n}\left(\frac{R^2-\|x\|^2}{2n}\right) \leq \frac{R^2}{2\lambda r^n n}
\] satisfies
\[
\left\{\begin{array}{ll}
L_\lambda \ell = -\frac{1}{r^n} & \text{ in }\mathbb{B}_R,\\
\ell = 0 & \text{ on }\partial \mathbb{B}_R.
\end{array}
\right.
\] Since $L_\lambda$ also has ellipticity constant $\lambda$, by \cite[Corollary 7.1]{LSW} there is a constant $K>0$ depending only on $\lambda$ such that $\tilde{w}\leq K\ell$. Hence,
\[
w \leq \tilde{w} \leq K\ell \leq \frac{KR^2}{2\lambda r^nn} =: k(r,\lambda).
\]
\end{proof}

We can now prove that every sufficiently small harmonic ball contains a geodesic ball with proportional radius.

\begin{theorem}\label{thm:bolita} There exists a constant $c>0$ such that for every $0 < a \leq a_0$, where $a_0$ is the constant from Proposition \ref{thm:positive_distance}, and for every $p\in\M$,
\[
B(p,ca^{1/n}) \subseteq B^h(p,a).
\]
\end{theorem}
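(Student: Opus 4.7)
The plan is to convert the statement, via Proposition~\ref{thm:bh_mvs}, into a pointwise comparison between $w_r$ and $G_R^{\delta_p}$, then to establish a sharp upper bound on $w_r$ via a localized application of Lemma~\ref{lem:cota_k}, and finally to combine this with the singular behaviour of $G_R^{\delta_p}$ near $p$. For $a\in(0,a_0]$ set $r=a^{1/n}\le r_0$. Proposition~\ref{thm:bh_mvs} identifies $B^h(p,a)$ with $D_p(r)=\{w_r<G_R^{\delta_p}\}$, so it is enough to exhibit a constant $c>0$, independent of $p$ and $r$, such that $G_R^{\delta_p}(x)>w_r(x)$ for every $x\in B(p,cr)$.

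The core task is to prove
\[
w_r(x)\le C_1\,r^{2-n}\quad\text{on }B(p,r),
\]
(with $C_1|\log r|$ in place of $C_1 r^{2-n}$ when $n=2$), where $C_1$ depends only on $\M$ and $R$. A direct use of Lemma~\ref{lem:cota_k} on $\mathbb{B}_R$ only yields $w_r\le KR^2/(2\lambda n r^n)$, which is off by a factor $r^{-2}$. I would recover the correct scaling by \emph{localizing} on the Euclidean ball $\mathbb{B}_r$ of radius $r$ in normal coordinates around $p$: let $H$ be the $L$-harmonic extension of $w_r|_{\partial \mathbb{B}_r}$ to $\mathbb{B}_r$, and set $W=w_r-H$. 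Since $\Delta w_r=r^{-n}\chi_{D_p(r)}$ translates in coordinates to $Lw_r=-b/r^n$ with $b=\sqrt{|g|}\chi_{D_p(r)}\le 1$ (Remark~\ref{rmk:ellipticity_uniform}), the function $W$ satisfies $LW=-b/r^n$ and vanishes on $\partial \mathbb{B}_r$, so Lemma~\ref{lem:cota_k} applied on $\mathbb{B}_r$ (with the lemma's $R$ equal to $r$) gives $W\le (K/(2\lambda n))\,r^{2-n}$. The maximum principle for the uniformly elliptic $L$ bounds $H$ by $\max_{\partial B(p,r)}G_R^{\delta_p}$, and the classical asymptotics $G^{\delta_p}(x)\le c_n d(p,x)^{2-n}+O(1)$ (see \cite[Theorem~4.13]{Aubin}), together with $G_R^{\delta_p}=G^{\delta_p}-s_R$ from the proof of Proposition~\ref{thm:bh_mvs} and the compactness of $\M$, yield $H\le C r^{2-n}+O(1)$ uniformly in $p$. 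Summing $W$ and $H$ produces the claimed pointwise bound on $w_r$.

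To finish I would use the matching lower bound $G_R^{\delta_p}(x)\ge c_\ast d(p,x)^{2-n}-C_2$, valid for $d(p,x)$ small and uniform in $p\in\M$. For $x\in B(p,cr)$ with $c<1$ this gives $G_R^{\delta_p}(x)\ge c_\ast c^{2-n}r^{2-n}-C_2$, which exceeds $C_1 r^{2-n}$ as soon as $c$ is chosen small enough that $c_\ast c^{2-n}>2C_1$ (the $O(1)$ additive terms are absorbed by $r^{2-n}$ since $r\le r_0$ is bounded). The case $n=2$ runs in parallel, with logarithmic growth of $G_R^{\delta_p}$ replacing the power law and the gain $\log(1/c)$ from shrinking $c$ closing the comparison. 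The hard part is the sharpness of the upper bound on $w_r$: Lemma~\ref{lem:cota_k} used naively on $\mathbb{B}_R$ is too weak by a factor $r^{-2}$ and would only produce $B(p,c\,a^{1/(n-2)})\subseteq B^h(p,a)$; only the localization together with the $L$-harmonic/compactly-supported-source decomposition of $w_r$ restores the correct scaling. A secondary point is to check that the constants in the Green's function asymptotics can be taken uniform in $p$, which follows from compactness of $\M$ and the joint smoothness of $(p,x)\mapsto G(p,x)$ off the diagonal.
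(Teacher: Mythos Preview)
Your argument is correct and arrives at the same conclusion, but the route is genuinely different from the paper's. You work directly at scale $r$: restrict $w_r$ to $\mathbb{B}_r$, split it as $W+H$ with $H$ $L$-harmonic and $W$ vanishing on $\partial\mathbb{B}_r$, apply Lemma~\ref{lem:cota_k} on $\mathbb{B}_r$ (exploiting that the LSW constant $K$ depends only on ellipticity, hence is scale-free) to get $W\le C r^{2-n}$, and control $H$ via the maximum principle and the obstacle constraint $w_r\le G_R^{\delta_p}$ together with the upper asymptotics of the Green's function on $\partial B(p,r)$. The paper instead works only at the fixed scale $r_0$: it applies Lemma~\ref{lem:cota_k} once on $\mathbb{B}_R$ to bound $w_{r_0}$ by a constant $k$, compares with the LSW lower bound $G_R^{\delta_p}(x)\ge\tilde{k}\,d(p,x)^{2-n}$ to obtain $B(p,\tilde{c})\subseteq D_p(r_0)$, and then transfers this to all smaller $r$ by a dilation argument, pulling back along $\varphi_s=\exp_p\circ(s\cdot\exp_p^{-1})$ and observing that the rescaled operator $L_s$ keeps the same ellipticity bounds. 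Your approach avoids the rescaling machinery at the cost of the harmonic/zero-boundary decomposition and needing both upper and lower Green's function asymptotics; the paper's approach needs only the LSW lower bound but must set up and verify the scaling step carefully. Both hinge on the same ingredients (Proposition~\ref{thm:bh_mvs}, Lemma~\ref{lem:cota_k}, and the LSW estimates), and the uniformity in $p$ is handled the same way via Remark~\ref{rmk:ellipticity_uniform} and compactness.
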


\begin{proof} By Proposition \ref{thm:bh_mvs}, it suffices to prove that there is a constant $c>0$ such that, for every $0 < r \leq r_0$ and for every $p\in\M$, $B(p,cr)\subseteq D_p(r)$. Let $w_{r_0}$ be the function associated to $D_p(r_0)$ (so that $u_{r^n_0} = r_0^n(w_{r_0}+s_R)$ as in the proof of Proposition \ref{thm:harmonic_geodesic}). The function $w_{r_0}$ is continuous and satisfies
\[
\left\{\begin{array}{ll}
Lw_{r_0} = -\frac{\sqrt{|g|}}{r_0^n}\chi_{\{w_{r_0} < G_R^{\delta_p}\}} & \text{ in }\mathbb{B}_R,\\
w_{r_0} = 0 & \text{ on }\partial \mathbb{B}_R,
\end{array}
\right.
\] for the operator
\[
L = D_i \sqrt{|g|}g^{ij}D_j,
\] which is uniformly elliptic on $\mathbb{B}_R$ and, from Remark \ref{rmk:ellipticity_uniform}, has ellipticity constants $0 < \lambda \leq \Lambda$ not depending on $p$. Since $\sqrt{|g|}\leq 1$, we know from Lemma \ref{lem:cota_k} that there exists a constant $k > 0$, depending only on $r_0$ and $\lambda$, such that $w_{r_0}\leq k$. From \cite[Theorem 7.1]{LSW}, there is also a constant $\tilde{k} = \tilde{k}(\lambda) > 0 $ such that
\[
G_R^{\delta_p}(x) \geq \frac{\tilde{k}}{d(p,x)^{n-2}}
\] if $n > 2$, and
\[
G_R^{\delta_p}\geq -\tilde{k}\log d(x,y)
\] if $n = 2$. We will prove the Theorem for $n>2$. The case $n = 2$ is similar. With these bounds, we have that
\[
G_R^{\delta_p}(x) - w_r(x) \geq \frac{\tilde{k}}{d(p,x)^{n-2}}-k,
\] which is positive as long as $d(p,x) < \tilde{c} := (\tilde{k}/k)^{\frac{1}{n-2}}$. This shows that there is a constant $\tilde{c} = \tilde{c}(r_0,\lambda) > 0$ such that
\[
B(p,\tilde{c}) \subseteq \{w_{r_0} < G_R^{\delta_p}\} = D_p(r_0) = B^h(p,r_0^n).
\]

Now let $0 < s \leq 1$ be some scaling factor, and consider the map $\vphi_s:B(p,R)\ra B(p,sR)$ given by $\vphi_s = \exp_p s \exp_p^{-1}$. Let $L_s$ be the operator
\[
L_s = D_i (\sqrt{|g|}g^{ij}\circ\vphi_s)D_j.
\] It is easy to see that the function $w_{sr_0}\circ \vphi_s$ satisfies
\[
L_s w_{sr_0} = -\frac{\sqrt{|g|}\circ\vphi_s s^{2-n}}{r_0^n}\chi_{\{w_{sr_0}\circ \vphi_s < G_R^{\delta_p}\circ\vphi_s\}}.
\] Moreover,
\[
L_s G_R^{\delta_p}\circ \vphi_s = s^{2-n}\delta_p = \sqrt{|g|}\circ\vphi_s s^{2-n}\delta_p.
\] Hence, if we let $v_{sr_0} = G_R^{\delta_p}-w_{sr_0}$, then we have
\[
L_s(s^{n-2}v_{sr_0}\circ\vphi_s) = \sqrt{|g|}\circ\vphi_s\left(\delta_p+\frac{1}{r_0^n}\chi_{\{v_{sr_0}\circ\vphi_s > 0\}}\right) = \sqrt{|g|}\circ\vphi_s\left(\delta_p+\frac{1}{r_0^n}\chi_{\{s^{n-2}v_{sr_00}\circ\vphi_s > 0\}}\right).
\] Since $\sqrt{|g|}\circ\vphi_s\leq 1$ and $L_s$ has the same ellipticity constant $\lambda$ as $L$, we obtain that
\[
B(p,\tilde{c}) \subseteq \{v_{sr_0}\circ\vphi_s>0\},
\] or
\[
B(p,s\tilde{c}) \subseteq \{v_{sr_0}>0\} = D_p(sr_0).
\] Finally, the change of variables $s = \frac{r}{r_0}$ and $\tilde{c} = r_0c$ yields
\[
B(p,cr) \subseteq D_p(r)
\] for every $0 < r \leq r_0$.
\end{proof}

Theorem \ref{thm:main3} is now a trivial consequence of the above.

\begin{proof}[Proof of Theorem \ref{thm:main3}] From Theorem \ref{thm:main1}, every two distinct points $x$ and $y$ in a minimizing configuration for the Green energy satisfy
\[
x\notin B^h\left(y,\frac{V}{N-1}\right).
\] From Theorem \ref{thm:bolita}, there is a constant $c$, depending only on $\M$, such that for every sufficiently large $N$,
\[
B\left(y,c\left(\frac{V}{N-1}\right)^{1/n}\right) \subseteq B^h\left(y,\frac{V}{N-1}\right).
\] This implies that for sufficiently large $N$,
\[
d(x,y) \geq c\left(\frac{V}{N-1}\right)^{1/n}.
\] By making $c$ smaller if necessary, there is another constant $c'$ such that $d(x,y) \geq c'N^{-1/n}$ for every $N\geq 2$.
\end{proof}

\section*{Acknowledgements}  The author would like to thank Ivan Blank for his helpful comments.

\bibliographystyle{plain}

\end{document}